\newenvironment{proofmainthm}{\smallskip \noindent {\it Proof of Theorem \ref{mainthm}.}}{\hfill \rule{2mm}{2mm}\hspace{1in}\smallskip}
\newtheorem{teo}{Theorem}[section]
\newtheorem{lema}{Lemma}[section]
\newtheorem{prop}{Proposition}[section]
\newtheorem{defi}{Definition}[section]
\newtheorem{obs}{Remark}[section]
\newcommand{\loc}{_{loc}}
\newcommand{\vare}{\varepsilon}
\newcommand{\real}{\mathbb{R}}
\title[Weak stability of Lagrangian solutions]{Weak stability of Lagrangian solutions to the semigeostrophic equations}
\author[]{Josiane C. O.  Faria}
\address{Departamento de Matem\'{a}tica\\IMECC, Cx. Postal 6065\\Universidade Estadual de Campinas - UNICAMP\\ Campinas, SP 13083-970, Brazil} 
\email{josianecristina@gmail.com}
\author[]{Milton C. Lopes Filho}
\address{Departamento de Matem\'{a}tica\\IMECC, Cx. Postal 6065\\Universidade Estadual de Campinas - UNICAMP\\ Campinas, SP 13083-970, Brazil}
\email{mlopes@ime.unicamp.br}\pagestyle{myheadings}
\author[Helena J. Nussenzveig Lopes {\em et al.}]{Helena J. Nussenzveig Lopes}
\address{Departamento de Matem\'{a}tica \\IMECC, Cx. Postal 6065 \\Universidade Estadual de Campinas - UNICAMP\\Campinas SP 13083-970, Brazil}
\email{hlopes@ime.unicamp.br}
\date{}
\begin{document}

\maketitle

\begin{abstract}
In \cite{culefel}, Cullen and Feldman proved existence of Lagrangian
solutions for the semigeostrophic system in physical variables with
initial potential vorticity in $L^p$, $p>1$. Here, we show that a 
subsequence of the Lagrangian solutions corresponding to a strongly convergent 
sequence of initial potential vorticities in $L^1$ converges strongly in 
$L^q$, $q<\infty$, to a Lagrangian solution, in particular extending the existence result 
of Cullen and Feldman to the case $p=1$. We also present a counterexample for Lagrangian 
solutions corresponding to a sequence of initial potential vorticities converging 
in $\mathcal{BM}$.  The analytical tools used include techniques from optimal 
transportation, Ambrosio's results on transport by $BV$ vector fields, 
and Orlicz spaces.
\end{abstract}

\section{Semigeostrophic equations in physical and in dual variables}

Semigeostrophic equations are simplified models for large-scale
geophysical flows. These systems were introduced by Hoskins in \cite{hoskins}, 
as part of a family of models for geophysical flows under approximate geostrophic 
balance, i.e. where Coriolis forces and horizontal gradients of pressure nearly
balance. We refer the reader to \cite{cul1} for a thorough account of 
semigeostrophy from the physical point of view.

In the present work, we are concerned with two versions of the semigeostrophic
system -- the incompressible 3D system and the shallow water system, both in a
bounded domain in $\mathbb{R}^3$ with constant Coriolis force. We will first focus the
discussion on the incompressible case, leaving the shallow water system to Section 4.
The incompressible semigeostrophic equation, or SG equation, has a rich mathematical structure, 
closely related with optimal transport theory. Written in {\it dual variables}, it can be 
interpreted as a fully nonlinear active scalar equation, where the transported scalar, called {\it
potential vorticity}, generates the transporting velocity by means of a 
Monge-Amp\`{e}re equation. Several results are available for the semigeostrophic 
system in dual form, beginning with the work of J.-D. Benamou and Y. Brenier, \cite{ben}, 
on existence of weak solutions, and including \cite{culegan,culemar,helena,mo04}.
In \cite{loeper}, G. Loeper proved existence of weak solutions for 
potential vorticities in the space of Radon measures. 
Obtaining a solution in physical variables from weak solutions of the
dual form is both delicate and physically relevant. For potential vorticities in
$L^p$, $p>1$, this problem was partially solved by M. Cullen and M. Feldman in \cite{culefel},
with the introduction of Lagrangian solutions to the system in physical
variables.  

This article´s main concern is the weak stability of Lagrangian solutions with respect to 
perturbations in the initial potential vorticity, complementing 
the work of Cullen and Feldman. Our main result is the weak 
compactness in $L^p$, for any $p < \infty$, of sequences of Lagrangian solutions, obtained 
from sequences of initial potential vorticities converging strongly in $L^1$. In addition, 
we also include analysis of the compressible shallow water case, as in \cite{culefel}. Finally, we
present a counterexample for the extension of our main result to initial potential vorticities 
in the space of measures. 
 
Let $\Omega\subset\mathbb{R}^3$ be open 
and bounded. The 3D incompressible semigeostrophic equations in physical variables are
a system of four equations with three components of velocity $u = (u_1,u_2,u_3)$ and
the pressure $p$ as unknown. Before we write down the system, we introduce the 
{\it geostrophic velocity} $v^g = (v^g_1,v^g_2,0)$ as $v^g = (-\partial_2 p, \partial_1 p , 0)$
and the density $\rho$ by assuming vertical hydrostatic balance $\rho = -\partial_3 p$.
The semigeostrophic equations have the form

\begin{equation} \left\{
\begin{array}{l}
D_t(v^g_1,v^g_2)+(v^g_2,-v^g_1)=(u_2,-u_1)\\
D_t\rho=0\\
\mbox{div } u=0,\\
\end{array} \right. \label{se}
\end{equation}
where $D_t = \partial_t + u \cdot \nabla$ is the material derivative.
The unknowns are functions of $(x,t)\in\Omega\times[0,T)$ with initial
and boundary data given by
\begin{equation}
\begin{array}{l}
u \cdot\nu=0\;\mbox{ on }\;\partial\Omega\times[0,T)\\
p(x,0)=p_0(x)\;\mbox{ in }\;\Omega, \end{array}
\end{equation}
where $\nu$ is the unit outer normal to $\partial \Omega$.
Taken together, these four equations make up a rather odd-looking system of PDEs. We have 
three transport equations for the components of $\nabla p$, in which $u$ enters only algebraically,
together with the divergence-free condition. That this system turns out to be solvable
only becomes apparent after a change of variables, which expresses its dual formulation.

In order to present the dual variable formulation of this system, we consider
the modified pressure $P$, given by

$$P(x,t)=p(x,t)+\frac{1}{2}(x^2_1+x^2_2)$$
and we rewrite the semigeostrophic system as
\begin{equation} \left\{
\begin{array}{l}
D_tX=J(X-x)\\
\mbox{div } u=0\\
X=\nabla P\\
u \cdot \nu=0\;\mbox{ on }\;\partial\Omega\times[0,T)\\
P(x,0)=P_0(x)\;\mbox{ in }\;\Omega, \end{array} \right. \label{la}
\end{equation}
where $$\displaystyle{J=\left(\begin{array}{rrrr} 0&-1&&0\\1&0&&0\\0&0&&0
\end{array}\right)}.$$

The dual formulation is obtained by switching dependent and independent variables
in the system above. More precisely, we assume that $P$ and $u$ are solutions of the system above, 
and, in addition, $P(\cdot,t)$ is convex and smooth for all $t$. A consequence of the 
convexity of $P$ is that $\nabla P(\cdot,t)$ is a diffeomorphism between $\Omega$ and some subset 
of $\real^3$. We introduce $\alpha \equiv \nabla P \sharp \chi_{\Omega}$, where the sharp indicates 
measure pushforward and $\chi_{\Omega}$ denotes the Lebesgue measure in $\Omega$.
To be precise, if $\Omega_1$ and $\Omega_2$ are subsets of $\real^n$, $\mu$ is a measure on $\Omega_1$ and
$\nu$ is a measure on $\Omega_2$ and $X$ maps $\Omega_1$ to a subset of $\Omega_2$ then the notation
$X \sharp \mu = \nu$ means that:
\[ \int_{\Omega_2} f(y) d\nu = \int_{\Omega_2} f(X(x)) d\mu, \]
for any $f \in C^0(\Omega_2)$.

The measure $\alpha$ is called {\it potential vorticity}.
We also introduce $P^{\ast}$ the Legendre transform of $P$, i.e. 
\[P^{\ast}(X,t) \equiv \displaystyle{\sup_{x\in\Omega}\{x \cdot X-P(x,t)\}}.\]
The potential vorticity $\alpha$ satisfies the following system of equations:

\begin{equation} \left\{
\begin{array}{ll}
\partial_t\alpha+\nabla \cdot (U\alpha)=0&\mathbb{R}^3\times[0,T)\\
U(X,t)=J[X-\nabla P^{\ast}(X,t)]&\\
\nabla P(\cdot,t) \sharp \chi_{\Omega} = \alpha(\cdot,t)\\
\alpha(X,0)=\alpha_0(X)&\mbox{ a.e. }\;X\in \mathbb{R}^3.
\end{array} \right. \label{du}
\end{equation}

From the definition of the pushforward measure, we can see that
the statement $\nabla P(\cdot,t) \sharp \chi_{\Omega} = \alpha(\cdot,t)$ amounts to a weak
form of the equation $\mbox{det }(D^2 P^{\ast}) = \alpha$, with the condition that the image of 
$\nabla P^{\ast}$ is $\Omega$. This observation shows that \eqref{du} is an active scalar transport equation,
where the transporting velocity is determined from the transported scalar by means of a Monge-Amp\`{e}re equation.
The derivation of the dual system from the physical system is a standard calculation, and it can be found, 
for example, in \cite{ben}. The key hypothesis for the validity of this derivation is the convexity of $P$,
something which is preserved under semigeostrophic evolution, see \cite{ben}. 

The following result concerns existence of weak solutions for the semigeostrophic
system in dual variables. Let $\Omega$ be a bounded domain in $\real^3$, which we assume
is contained in the ball of radius $S>0$, centered at the origin. Let $T>0$. 

We fix the bounded smooth domain $\Omega$ in physical space, the radius $S>0$ and the time 
horizon $T>0$ throughout the remainder of this paper.

We require the notation and terminology associated to Orlicz spaces in the statement below. We will briefly account 
for the basic theory of Orlicz spaces in Section 2.

\begin{teo}\cite{ben,culefel,helena} \label{existheoMaut}
Let $P_0=P_0(x)$ be a bounded, convex function in $\Omega$,
and let $\alpha_0:=DP_0\#\chi_{\Omega}$. Suppose that $\alpha_0 \in L^q(\mathbb{R}^3)$
for some $q\geq 1$, and it is compactly supported. Let $R_0$ be such that the support of $\alpha_0$ 
is contained in the ball $B(0,R_0)$ and set $R(T) = R_0 e^T + (e^T - 1)S$.
Then, for any $t>0$, there exist functions
$\alpha = \alpha(X,t) \in L^{\infty}([0,T),L^q(\mathbb{R}^3))$, $P=P(X,t) \in L^{\infty}([0,T),W^{1,\infty}(\Omega))$,  
and an $N$-function $A$ such that
\begin{enumerate}
\item[(i)] 
  \begin{equation}
  supp(\alpha(\cdot,t))\subset B(0,R(T));\;\forall t\in[0,T),
  \end{equation}
\item[(ii)] For each $0\leq t < T$, $P(\cdot,t)$ is convex, and $\alpha$, $P$ satisfy
$$\alpha \in  C([0,T),L_A(B(0,R(T))))\; \mbox{ and } \; P \in C([0,T),W^{1,r}(\Omega)),$$
for any real number $ r \in  [1,\infty)$,
where $L_A$ is the Orlicz space associated with the $N$-function $A$.
\item[(iii)] $P^{\ast}(\cdot,t)$ is convex, pointwise in time, locally bounded in space-time
    and
\[\nabla P^{\ast}\in L^{\infty}([0,T),E_{A^{\ast}}(B(0,R(T))))\cap
C([0,T),L^r(B(0,R(T))))\]
for any $r\in[1,\infty)$, where $E_{A^{\ast}}$ is the dual of $L_A$. Moreover,
    \begin{equation}
    \Vert\nabla P^{\ast}(\cdot ,t)\Vert_{L^{\infty}(\mathbb{R}^3)}\leq
    S\quad\forall t\in [0,T),
    \end{equation}
    \item[(iv)] $(\alpha,P,P^{\ast})$ satisfy (\ref{du}), where the evolution equation and the initial data for
$\alpha$ are understood in the weak sense, i.e., for each $\phi \in C^1_c(\mathbb{R}^3\times [0,T))$
\begin{equation}
\begin{array}{l}
\displaystyle{\int_{\mathbb{R}^3\times[0,T)}[\partial_t\phi(X,t)+U(X,t) \cdot \nabla\phi(X,t)]\alpha(X,t)dXdt+}\\
\;\;\;\;\;\;\;\;\;\;\;\;\;\;\;\;\;\;\;\;\;\;\;\displaystyle{+\int_{\mathbb{R}^3}\alpha_0(X)\phi(X,0)dX=0.}
\end{array}
\end{equation}
\end{enumerate}
\end{teo}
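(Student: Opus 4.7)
The strategy is the standard one for the dual semigeostrophic system: regularize the initial potential vorticity, solve the smoothed Eulerian problem by a time-stepping/fixed-point scheme, and pass to the limit. First I would approximate $\alpha_0$ by a sequence $\alpha_0^n\in C_c^\infty(\mathbb{R}^3)$ with $\alpha_0^n\to\alpha_0$ in $L^q$ and $\mathrm{supp}\,\alpha_0^n\subset B(0,R_0)$, and obtain convex potentials $P_0^n$ via Brenier's polar factorization pushing $\chi_\Omega$ onto $\alpha_0^n$. For each $n$ I would construct a solution via time discretization: on $[t_k,t_{k+1}]$ freeze the velocity $U^n_k(X)=J[X-\nabla P^{*,n}(\cdot,t_k)]$, transport $\alpha^n$ by the resulting (Lipschitz, for smooth data) flow, and then re-solve the optimal-transport problem $\nabla P^n(\cdot,t_{k+1})\sharp\chi_\Omega=\alpha^n(\cdot,t_{k+1})$. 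A diagonal argument as the time step vanishes produces approximate solutions $(\alpha^n,P^n,P^{*,n})$ of \eqref{du} in the weak sense.

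Next I would derive the uniform estimates (i)--(iii). The support bound (i) follows from $|U(X,t)|\le|X|+\|\nabla P^*\|_\infty\le|X|+S$ and Gr\"onwall applied to the characteristic flow, giving $|X(t)|\le e^t|X(0)|+(e^t-1)S$. Conservation of any Lebesgue or Orlicz norm of $\alpha$ along the flow is immediate because $\mathrm{div}\,U=-\mathrm{tr}(JD^2P^*)=0$ (antisymmetric times symmetric). The bound $\|\nabla P^*(\cdot,t)\|_\infty\le S$ holds by construction since the image of $\nabla P^*(\cdot,t)$ sits inside $\Omega\subset B(0,S)$, and the Lipschitz/convexity statements for $P$ follow dually from the Legendre transform.

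The passage to the limit is where the Orlicz-space machinery enters, and this is the reason the theorem covers $q\ge 1$. Weak-$*$ compactness in $L^\infty_t L^q_x$ is not enough when $q=1$ to control the nonlinear flux $U\alpha$ or to obtain time continuity. Since $\alpha_0^n\to\alpha_0$ strongly in $L^1$, the de la Vall\'ee-Poussin theorem furnishes a single $N$-function $A$ with $A(t)/t\to\infty$ such that $\{\alpha_0^n\}$ is bounded in $L_A$; the divergence-free transport preserves the distribution function, so $\{\alpha^n(\cdot,t)\}$ remains bounded in $L_A(B(0,R(T)))$ uniformly in $n,t$. This yields equi-integrability and hence strong $L^1$ (in fact $L^r$ for any $r<\infty$) relative compactness in space. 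Combined with the stability of Brenier maps — weak convergence of the target measure forces convergence of the convex potentials up to constants, hence strong $L^r$ convergence of $\nabla P^{*,n}$ on bounded sets — this permits passage to the limit in $U^n\alpha^n$ in the weak formulation, proving (iv). Continuity in time in (ii) and (iii) is then read off from the weak equation and the compactness.

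The main obstacle is precisely the last step: making sense of the product $U\alpha$ in the limit, since both factors are nonlinear functionals of the solution. The rigidity of optimal transport (converting weak convergence of $\alpha^n$ into strong convergence of $\nabla P^{*,n}$) coupled with the Orlicz-space equi-integrability of $\alpha^n$ is exactly what closes the compactness loop and makes the argument work at $q=1$.
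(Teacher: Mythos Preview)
The paper does not prove this theorem; it is quoted as a known result from \cite{ben,culefel,helena}, so there is no in-paper proof to compare against. Your outline is essentially the argument carried out in those references: the time-stepping construction and $L^p$ compactness are due to Benamou--Brenier and Cullen--Feldman, while the $q=1$ endpoint via de la Vall\'ee--Poussin/Orlicz equi-integrability and stability of Brenier maps is precisely the contribution of \cite{helena}. Your sketch is correct and faithful to that approach.

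One small caveat: you write that equi-integrability yields ``strong $L^r$ for any $r<\infty$'' relative compactness in space for $\alpha^n$. Equi-integrability gives only weak $L^1$ (equivalently weak-$\ast$ $L_A$) compactness for $\alpha^n$; the product $U^n\alpha^n$ is handled not by upgrading the compactness of $\alpha^n$ but by pairing the $L_A$ bound on $\alpha^n$ against the \emph{strong} $E_{A^\ast}$ convergence of $\nabla(P^n)^\ast$ (this is exactly how the duality in Theorem~\ref{csob} is used). That is the mechanism in \cite{helena}, and it is what the paper invokes later in \eqref{1}.
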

\vskip10pt

One would like to find solutions for the SG system in physical space, which means, solutions to 
\eqref{la}. Our point of departure is the work of Cullen and Feldman in \cite{culefel}. In that
article, Cullen and Feldman pointed out that:  
\begin{itemize}
\item concerning the  Eulerian form of system \eqref{la} -- a distributional formulation of this system
requires making sense of products of components of $u$ with first derivatives of $P$;
\item the formal expression for $u$ is given by $$u(x,t)=\partial_t\nabla P^{\ast}(\nabla
P(x,t),t)+D^2P^{\ast}(\nabla P(x,t),t)[J(\nabla P(x,t)-x)];$$
\item the known regularity for solutions of the dual problem has $P^{\ast}$
Lipschitz continuous.
\end{itemize}

Clearly, making sense of the physical velocity $u$ is complicated, given that $D^2 P^{\ast}$ is a measure while $\nabla P$ is 
only bounded, not to mention making sense of the product $u\cdot\nabla P$. As a consequence, seeking Eulerian solutions in physical variables is 
a difficult problem. This was the motivation for the introduction of the notion of Lagrangian solutions.  
  
 Let $P_0\in
W^{1,\infty}(\Omega)$ be a convex function and $r\in [1,\infty)$. Let
$P:\Omega\times[0,T)\rightarrow \mathbb{R}$ satisfy
\begin{eqnarray}
P \in L^{\infty}([0,T);W^{1,\infty}(\Omega))\cap
C([0,T),W^{1,r}(\Omega)),\label{2.33}\\
P(\cdot ,t)\;\mbox{is convex in} \;\Omega,\forall t\in [0,T).
\end{eqnarray}
Let $F:\Omega\times[0,T)\rightarrow\Omega$ be a Borelian map such that 
\begin{equation}
F\in C([0,T),L^r(\Omega)).\label{2.35} 
\end{equation}

\begin{defi}{\bf{(Lagrangian Solutions)}}  The pair
$(P,F)$ is called a Lagrangian solution of (\ref{la}) in
$\Omega\times [0,T)$ if
\begin{enumerate}
    \item[(i)] $F(x,0)=x, P(x,0)=P_0(x)$ for almost all $x\in\Omega$,
    \item[(ii)] for all $0\leq t < T$ the mapping
    $F_t=F(\cdot,t):\Omega\rightarrow\Omega$ preserves   Lebesgue measure, i.e.
    $F_t\#\chi_{\Omega}=\chi_{\Omega}$,
    \item[(iii)] There exists a Borelian map
    $F^{\ast}:\Omega\times[0,T)\rightarrow\Omega$ such that, for all
    $t\in (0,T)$, the map
    $F^{\ast}_t=F^{\ast}(\cdot,t)=\Omega\rightarrow\Omega$ preserves  Lebesgue measure,
    (i.e. $F^{\ast}_t\#\chi_{\Omega}=\chi_{\Omega}$), and satisfies
    $F_t\circ F^{\ast}_t(x)=x$ and $F^{\ast}_t\circ F_t(x)=x$ for almost all
    $x\in\Omega$,
    \item[(iv)] The function
    \begin{equation}
    Z=Z(x,t)=\nabla P(F_t(x),t)\label{2.36}
    \end{equation}
    is a weak solution of
    \begin{equation}
    \begin{array}{ll}
    \partial_tZ(x,t)=J[Z(x,t)-F(x,t)],& \mbox{em}\quad\Omega\times[0,T)\\
    Z(x,0)=\nabla P_0(x),&\mbox{sobre}\quad\Omega,
    \end{array}\label{2.37}\end{equation}
    in the following sense: for any $\varphi\in
    C^1_c(\Omega\times[0,T)),$
    \begin{equation}
    \begin{array}{l}
    \displaystyle{\int_{\Omega\times[0,T)}[Z(x,t) \cdot \partial_t\varphi(x,t)+J(Z(x,t)-F(x,t))\cdot \varphi(x,t)]dxdt}+\\
    \;\;\;\;\;\;\;\;\;\;\;\;\;\;\;\;\;\;\;\;\;\;\;\;\displaystyle{+\int_{\Omega}\nabla
    P_0(x)\cdot \varphi(x,0)dx=0.}
    \end{array}\label{2.38}
    \end{equation}
\end{enumerate}
\end{defi}
Given a Lagrangian solution $(P,F)$, the map $F(\cdot, t)$ is called a Lagrangian flow in physical space, for each $t \in [0,T)$.

Next we give the precise statement of existence of Lagrangian solutions.   
 
\begin{teo} \label{theo}
Let $P_0$ be convex and bounded in $B(0,S)$. Assume that
$P_0$ satisfies
\begin{equation}
DP_0\#\chi_{\Omega}\in L^q(\mathbb{R}^3)
\end{equation}
for some $q\geq1$, and that $DP_0\#\chi_{\Omega}$ is compactly supported. 
Then there exists a Lagrangian solution $(P,F)$ of (\ref{la})
in $\Omega\times[0,T)$, for which
(\ref{2.33})--(\ref{2.35}) are satisfied for any
$r\in[1,\infty)$. Moreover, the function $Z=Z(x,t)$, defined by
(\ref{2.36}), satisfies $Z(x,\cdot)\in W^{1,\infty}([0,T))$ for almost all
 $x\in\Omega$ and (\ref{2.37}) is also satisfied in the following sense:
\begin{equation}
\begin{array}{ll}
\partial_tZ(x,t)=J(Z(x,t)-F(x,t)),&\mbox{ for }\;(x,t)\in\Omega\times(0,T),\;\mathcal{L}^4-a.e.\\
Z(x,0)=\nabla P_0(x),&\mbox{ for }\;x\in\Omega,\;\mathcal{L}^3-a.e.
\end{array}
\end{equation}
\end{teo}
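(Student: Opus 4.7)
My plan is to deduce existence of Lagrangian solutions by lifting the dual solution produced by Theorem \ref{existheoMaut} back to physical variables via a flow of the dual velocity field $U(X,t) = J[X - \nabla P^{\ast}(X,t)]$, following the general strategy of Cullen and Feldman in \cite{culefel}. First, apply Theorem \ref{existheoMaut} with $\alpha_0 := \nabla P_0 \sharp \chi_{\Omega}$ to obtain $(\alpha,P,P^{\ast})$ solving (\ref{du}) weakly on $\mathbb{R}^3 \times [0,T)$. Two properties of $U$ are key: it is uniformly bounded on $\mathrm{supp}(\alpha(\cdot,t))$ since $\|\nabla P^{\ast}\|_{L^{\infty}} \le S$, and it is divergence-free, because $\nabla \cdot (JX) = 0$ by antisymmetry of $J$ while $\nabla \cdot (J\nabla P^{\ast}) = J_{ij} \partial_i \partial_j P^{\ast} = 0$ by symmetry of the matrix-valued measure $D^2 P^{\ast}$. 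Since $P^{\ast}(\cdot,t)$ is convex, $\nabla P^{\ast}$ is locally $BV$, so $U$ is a bounded, divergence-free, $BV_{loc}$ vector field, and Ambrosio's well-posedness theorem furnishes a unique measure-preserving Lagrangian flow $\Phi \colon [0,T) \times \mathbb{R}^3 \to \mathbb{R}^3$ for $U$, with measurable inverse $\Phi_t^{-1}$; moreover, $\Phi_t \sharp \alpha_0 = \alpha(\cdot,t)$.

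The Lagrangian solution is then built by taking the dual $P$ unchanged and defining
\[
F(x,t) := \nabla P^{\ast}(\Phi_t(\nabla P_0(x)), t), \qquad F^{\ast}(y,t) := \nabla P_0^{\ast}(\Phi_t^{-1}(\nabla P(y,t))),
\]
with $P_0^{\ast}$ the Legendre transform of $P_0$. Chaining the three pushforward identities $\nabla P_0 \sharp \chi_{\Omega} = \alpha_0$, $\Phi_t \sharp \alpha_0 = \alpha(\cdot,t)$, and $\nabla P^{\ast}(\cdot,t) \sharp \alpha(\cdot,t) = \chi_{\Omega}$ yields $F_t \sharp \chi_{\Omega} = \chi_{\Omega}$, and symmetrically for $F_t^{\ast}$. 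Brenier's uniqueness in optimal transport, applied to the convex potentials $P(\cdot,t)$ and $P^{\ast}(\cdot,t)$, says that $\nabla P(\cdot,t)$ and $\nabla P^{\ast}(\cdot,t)$ are mutual inverses almost everywhere, from which $F_t \circ F_t^{\ast} = F_t^{\ast} \circ F_t = \mathrm{id}$ a.e.\ follows. Setting $Z(x,t) := \nabla P(F(x,t),t) = \Phi_t(\nabla P_0(x))$, the fact that $\Phi$ is the flow of $U$ gives, for a.e.\ $x\in\Omega$ and $t \in (0,T)$,
\[
\partial_t Z(x,t) = U(Z(x,t),t) = J[Z(x,t) - \nabla P^{\ast}(Z(x,t),t)] = J[Z(x,t) - F(x,t)],
\]
which, tested against $\varphi \in C^1_c(\Omega \times [0,T))$, is exactly (\ref{2.38}). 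The $W^{1,\infty}([0,T))$ regularity of $Z(x,\cdot)$ comes from the uniform $L^{\infty}$ bound on $U$, which in turn upgrades the weak ODE to the pointwise a.e.\ version in the theorem. The continuity $F \in C([0,T);L^r(\Omega))$ follows from combining Theorem \ref{existheoMaut}(iii) for $\nabla P^{\ast}$ with the uniform continuity of $t \mapsto \Phi_t$ induced by the boundedness of $U$.

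I expect the main obstacle to be the construction and careful calibration of the flow $\Phi_t$ at the available regularity of $U$. The DiPerna-Lions/Ambrosio theory is designed precisely for this setting, but several non-routine verifications are needed: one must ensure that the Lagrangian flow preserves the measure $\alpha(\cdot,t)$ rather than merely $\chi_{\mathbb{R}^3}$; one must produce a measurable inverse $\Phi_t^{-1}$ compatible with the transport equation; and, most delicately, one must keep track of the null sets when composing $\Phi_t$ with the only almost-everywhere defined maps $\nabla P$, $\nabla P^{\ast}$, $\nabla P_0$, and $\nabla P_0^{\ast}$, since the flow may move measure through the exceptional sets on which the pointwise inverse identities fail. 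Once these issues are resolved, the verification of conditions (i)--(iv) in the definition of Lagrangian solution and the announced regularity reduce to combining the estimates above.
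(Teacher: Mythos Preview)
Your outline follows the same route the paper sketches after the statement of Theorem~\ref{theo} and attributes to \cite{culefel}: take the dual solution from Theorem~\ref{existheoMaut}, build the Ambrosio flow $\Phi_t$ of the dual velocity, and set $F_t = \nabla P_t^\ast \circ \Phi_t \circ \nabla P_0$ as in (\ref{flu}). The paper does not give a self-contained proof of Theorem~\ref{theo}; it cites \cite{culefel} for $q>1$ and says the $q=1$ case is ``embedded in the current work,'' meaning that once Theorem~\ref{existheoMaut} holds with the Orlicz pairing $L_A$/$E_{A^\ast}$ in place of the $L^q$/$L^{q'}$ pairing, the Cullen--Feldman verification goes through verbatim. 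So you are on the right track.

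There is one concrete gap you should fix. You claim that $U(X,t)=J[X-\nabla P^\ast(X,t)]$ is a bounded $BV_{\mathrm{loc}}$ vector field to which Ambrosio's theorem applies. It is not globally bounded: the $JX$ term grows linearly, and Ambrosio's well-posedness requires an $L^\infty$ (or suitably controlled growth) bound. The paper handles this explicitly by introducing the cutoff $H(X)$ in (\ref{2.47}) and replacing $U$ by $\tilde U(X,t)=J[H(X)-\nabla P^\ast(X,t)]$ as in (\ref{2.48}); one then checks that $\tilde U=U$ on $B(0,R(T))$ and that the flow started in $\nabla P_0(\Omega)$ never leaves this ball (Proposition~\ref{ambrosio}(v)), so the modification is invisible to the construction of $F$. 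Without this truncation step your appeal to \cite{amb} is not justified. The remaining points you flag as delicate---null-set bookkeeping in the compositions, existence of the inverse flow, and the pushforward identities---are exactly the content of Proposition~\ref{ambrosio} and the Cullen--Feldman argument, and your sketch of them is fine.
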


The case $q>1$ is the main result in \cite{culefel}. It was observed by one of the authors 
of the present paper that the result in \cite{culefel} can be extended to $q=1$ using the
technique from \cite{helena}. The proof of the case $q=1$ of Theorem \ref{theo} is embedded in 
the current work.

Let us briefly examine the construction underlying the proof of Theorem \ref{theo}.
Under the hypotheses of Theorem 1.2, from the solution of the dual problem, and using
the Polar Factorization Theorem (see \cite{bre}), one obtains $P$. Given $P$, it can be shown that the 
following expression gives rise to a Lagrangian flow in physical space:
\begin{equation} \label{flu}
F = F(x,t) = \nabla P^{\ast}_t\circ\Phi_t\circ\nabla P_0(x),
\end{equation}
where, for each $t$, $\Phi_t$ is the Lagrangian flow in dual space, obtained using
Ambrosio's theorem as follows.

Consider the transport equation
$$\partial_t\alpha+U.\nabla \alpha=0,$$
which is equivalent to the first equation  in (\ref{du}), since $\mbox{div }
U=0$. From the regularity of $P^{\ast}$  we have
$$U\in L^{\infty}_{loc}(\mathbb{R}^3\times[0,T)),\qquad U\in
L^{\infty}([0,T),BV_{\loc})$$

One  uses Ambrosio's theorem to obtain the Lagrangian flow
associated with the transport equation above. To do so one  must
modify the velocity $U$ near infinity without affecting the solution $\alpha$; this  can be achieved since $\alpha$ has compact support in 
$\mathbb{R}^3\times[0,T)$.  
We have 
\[\mbox{supp } \alpha\subset\overline{B(0,R(T))}\times[0,T],\]
where $R(T)$ was introduced in Theorem \ref{existheoMaut} (i).

We introduce a modified velocity $\tilde U$ as follows: choose $\varrho\in
C^{\infty}(\mathbb{R})$ such that
\begin{equation}
\varrho\equiv1\;\mbox{ in }\;\{\vert
s\vert<R(T)\},\quad\varrho\equiv0\;\mbox{ in }\;\{\vert
s\vert>R(T)+1\},\quad0\leq\varrho\leq1\;\mbox{ in }\;\mathbb{R},\label{2.46}
\end{equation}
and define, for $X\in\mathbb{R}^3$,
\begin{equation}
H(X)=(\varrho(\vert X_1\vert)X_1,\varrho(\vert
X_2\vert)X_2,\varrho(\vert X_3\vert)X_3).\label{2.47}
\end{equation}

The modified velocity $\tilde U$ is then given by
\begin{equation}
\tilde U(X,t)=J[H(X)-\nabla P^{\ast}(X,t)],
\label{2.48}\end{equation} and, therefore $\tilde U$ satisfies
\begin{equation}
\begin{array}{l}
\displaystyle{\tilde U\in L^{\infty}(\mathbb{R}^3\times [0,T))}\\
\displaystyle{\tilde U\in L^{\infty}([0,T),BV_{\loc})}\\
\displaystyle{\mbox{ div }\tilde
U(\cdot,t)=0}\;\mbox{ in }\;\mathbb{R}^3,\;\mbox{ for all }\;t\in[0,T).
\end{array} \end{equation}
Furthermore,
\begin{equation}
U=\tilde U\;\mbox{ in }\;B(0,R(T)),\end{equation}\begin{equation}
\Vert\tilde U(\cdot,t)\Vert_{L^{\infty}(\mathbb{R}^3)}\leq
S+R(T)+1\;\mbox{ for all }\;t\in[0,T).
\end{equation}
 
Therefore, using the results of L. Ambrosio in \cite{amb}, we have

\begin{prop} \label{ambrosio}
Let $\tilde U$ be given by (\ref{2.48}). There exists
a  unique locally bounded and Borel measurable mapping
$\Phi:\mathbb{R}^3\times [0,T)\rightarrow\mathbb{R}^3$ satisfying:
\begin{enumerate}
    \item[(i)] $\Phi(X,\cdot)\in W^{1,\infty}([0,T))$ for almost all
    $X\in \mathbb{R}^3$;
    \item[(ii)] $\Phi(X,0)=X$, $X\in\mathbb{R}^3$,
    $\mathcal{L}^3-a.e.$;
    \item[(iii)] For almost all $(X,t)\in\mathbb{R}^3\times(0,T)$
    \begin{equation}
\partial_t\Phi(X,t)=\tilde U(\Phi(X,t),t);
\end{equation}
    \item[(iv)] $\Phi(\cdot,t):\mathbb{R}^3\rightarrow\mathbb{R}^3$
    preserves   Lebesgue measure in
    $\mathbb{R}^3$ for all $t\in[0,T)$.
    \item[(v)]  We have
    \begin{equation}
    \Phi(X,t)\subset B(0,R(T))\;\mbox{ for almost all }\;(X,t)\in\nabla P_0(\Omega)\times [0,T).
    \end{equation}
     In particular,
    \begin{equation}
    \partial_t\Phi(X,t)=U(\Phi(X,t),t)\;\mbox{for almost all}\;(X,t)\in\nabla P_0(\Omega) \times[0,T).
    \end{equation}
    \item[(vi)] There exists a Borel mapping
    $\Phi^{\ast}:\mathbb{R}^3\times[0,T)\rightarrow\mathbb{R}^3$ such that,
    for all $t\in(0,T)$ the map
    $\Phi_t^{\ast}:\mathbb{R}^3\rightarrow\mathbb{R}^3$ preserves  Lebesgue
    measure in $\mathbb{R}^3$, and satisfies 
    $\Phi_t^{\ast}\circ\Phi_t(x)=x$ and
    $\Phi_t\circ\Phi_t^{\ast}(x)=x$, for almost all
    $x\in\mathbb{R}^3$,
    \item[(vii)] Under the conditions of Theorem 1.2, if $(\alpha, P)$
    is a weak solution of (\ref{du}), and if (i)- (vi) hold, then for any $t\in [0,T]$,
    \begin{equation}
    \alpha_t=\Phi_t\#\alpha_0.
    \end{equation}
    Moreover, for any $t\in[0,T],$
    \begin{equation}
\alpha_t(x)=\alpha_0(\Phi^{\ast}_t(x))\;\mbox{ for almost all }\;x\in\mathbb{R}^3.
\end{equation}
\end{enumerate}
\end{prop}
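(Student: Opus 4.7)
The plan is to invoke Ambrosio's theorem on transport by $BV$ vector fields from \cite{amb} directly, applied to the modified velocity $\tilde U$. By construction $\tilde U \in L^{\infty}(\mathbb{R}^3 \times [0,T))$, is divergence-free in $\mathbb{R}^3$ for all $t$, and lies in $L^{\infty}([0,T), BV_{\loc})$; together with the measurability hypotheses these are precisely the assumptions required by Ambrosio's well-posedness theory for regular Lagrangian flows. This gives at once the existence and uniqueness of a locally bounded Borel map $\Phi$ satisfying properties (i)--(iv).

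For item (v), I would argue via Gronwall. Fix $X\in\nabla P_0(\Omega)\subset B(0,R_0)$. Since $\|\nabla P^{\ast}(\cdot,t)\|_{L^{\infty}}\le S$ by Theorem \ref{existheoMaut}, the field $U(Y,t)=J[Y-\nabla P^{\ast}(Y,t)]$ satisfies $|U(Y,t)|\le |Y|+S$, and this estimate also controls $\tilde U$ as long as the trajectory stays in $B(0,R(T))$, where $U=\tilde U$. So long as $\Phi(X,s)\in B(0,R(T))$ for $s\le t$, integration of the ODE in (iii) yields
\[|\Phi(X,t)|\le R_0+\int_0^t(|\Phi(X,s)|+S)\,ds,\]
and Gronwall gives $|\Phi(X,t)|\le R_0 e^t+(e^t-1)S\le R(T)$. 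A standard continuity argument rules out the trajectory ever reaching $\partial B(0,R(T))$, so the bound is global on $[0,T)$; the second assertion of (v) then follows immediately from $U=\tilde U$ inside $B(0,R(T))$. For (vi), the existence of $\Phi^{\ast}$ together with the inversion identities is part of Ambrosio's framework for divergence-free $L^{\infty}\cap BV_{\loc}$ fields, obtained by applying the theory to the time-reversed velocity, which inherits the same structural hypotheses; measure preservation of $\Phi^{\ast}_t$ is again a consequence of $\mathrm{div}\,\tilde U=0$.

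For (vii), within $B(0,R(T))$ the velocities $U$ and $\tilde U$ coincide, and $\alpha$ is supported in this ball uniformly in $t$. Therefore $\alpha$ is a distributional solution of $\partial_t\alpha+\mathrm{div}(\tilde U\alpha)=0$ with initial datum $\alpha_0\in L^q$. The uniqueness part of Ambrosio's theorem identifies this renormalized solution with the pushforward $\Phi_t\sharp\alpha_0$, which is the first assertion of (vii). The pointwise identity $\alpha_t(x)=\alpha_0(\Phi^{\ast}_t(x))$ then follows by applying the pushforward definition against test functions and using that $\Phi^{\ast}_t$ preserves Lebesgue measure, so the change of variables involves no Jacobian factor. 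The main obstacle in this program is the verification of the $BV_{\loc}$ regularity of $\tilde U$: this requires showing that $\nabla P^{\ast}(\cdot,t)$ is locally $BV$ uniformly in $t$, which relies on $P^{\ast}(\cdot,t)$ being convex and locally Lipschitz, so that its second distributional derivatives are locally finite Radon measures; everything else is a direct instantiation of the Ambrosio machinery to the present setting.
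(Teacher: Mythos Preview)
Your proposal is correct and matches the paper's approach: the paper does not give an explicit proof of this proposition, but simply states it as a consequence of Ambrosio's results in \cite{amb} (and implicitly of the analysis in \cite{culefel}), after recording that $\tilde U\in L^{\infty}(\mathbb{R}^3\times[0,T))\cap L^{\infty}([0,T),BV_{\loc})$ with $\mathrm{div}\,\tilde U=0$. Your sketch of (i)--(iv) and (vi)--(vii) via Ambrosio's regular Lagrangian flow theory, and of (v) via the Gronwall estimate producing exactly the bound $R_0 e^t+(e^t-1)S\le R(T)$, is precisely the intended justification and in fact supplies more detail than the paper itself records.
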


\section{Orlicz spaces}

In what follows, we will collect definitions and a few results
concerning Orlicz spaces. For details, see \cite{sob} and
\cite{helena}.

Consider $a:[0,\infty)\rightarrow[0,\infty)$ with the following properties:
\begin{enumerate}
    \item[(i)] $a(0)=0$, $a(t)>0$ if $t>0$ and
    $\displaystyle{\lim_{t\rightarrow\infty}a(t)=\infty}$,
    \item[(ii)] $a$ is non-decreasing,
    \item[(iii)] $a$ is right-continuous.
\end{enumerate}
The function $A$, defined on $[0,\infty)$ by taking
$$A(t)=\int_0^t a(\tau)d\tau,$$
is called an $N$-function.

We note that $N$-functions are continuous on $[0,\infty)$,
convex and strictly increasing.

An $N$-function is said to be $\Delta$-regular if there exists a positive constant $C$ and 
$t_0>0$ such that $A(2t)\leq C A(t),\;\forall t\geq t_0$.

Let $\Omega$ be a domain in $R^n$, and $A$ an $N$-function. The
Orlicz space $L_A(\Omega)$ is  the linear closure of the
set of functions $u\in L^1_{loc}(\Omega)$ such that $A(\vert u\vert)$
is integrable. These are Banach spaces with norm given by 
$$\Vert u\Vert_A=\inf\left\{k>0,\;\int_{\Omega}A\left(\frac{\vert
u(x)\vert}{k}\right)dx\leq1\right\}.$$ 

They generalize the Lebesgue spaces $L^p(\Omega)$, which are Orlicz spaces, with
$N$-function $A(t)=t^p$. We denote by $E_A(\Omega)$ the closure, with respect to $\|\cdot\|_A$,  
of the set of smooth, compactly supported functions in $\Omega$. For every
$N$-function $A$, we have that $E_A(\Omega)$ is separable. In
general, $L_A(\Omega)$ and $E_A(\Omega)$ are distinct, and $L_A(\Omega)$
is not separable. However, when $A$ is $\Delta$-regular, $L_A=E_A$.

Let $A$ be an $N$-function. Its Legendre transform 
$A^{\ast}$ is given  by
$$A^{\ast}=A^{\ast}(s)=\max_{t\geq 0}\{st-A(t)\}.$$
One can verify that $A^{\ast}$ is also an $N$-function and that
$A^{\ast\ast}=A$. 

Finally,  the following classical results will be relevant in our analysis.

\begin{teo}\cite{sob} \label{csob} 
The dual of $E_A(\Omega)$ is
$L_{A^{\ast}}(\Omega)$. \end{teo}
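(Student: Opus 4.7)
The plan is to establish two inclusions: an embedding $L_{A^{\ast}}(\Omega) \hookrightarrow (E_A(\Omega))^{\ast}$ via the natural integration pairing, and the reverse direction by showing every bounded linear functional on $E_A(\Omega)$ arises from some $v \in L_{A^{\ast}}(\Omega)$.

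For the embedding, given $v \in L_{A^{\ast}}(\Omega)$ I would define
$$\Lambda_v(u) := \int_\Omega u(x) v(x) \, dx, \qquad u \in E_A(\Omega).$$
Well-definedness and boundedness of $\Lambda_v$ rest on Young's inequality $st \leq A(t) + A^{\ast}(s)$, which by rescaling yields the Orlicz--H\"older estimate $\|uv\|_{L^1} \leq 2 \|u\|_A \|v\|_{A^{\ast}}$, so $\|\Lambda_v\|_{(E_A)^{\ast}} \leq 2\|v\|_{A^{\ast}}$. A matching lower bound and injectivity would follow from the variational formula
$$\|v\|_{A^{\ast}} = \sup\left\{ \int_\Omega uv \, dx : u \in E_A(\Omega), \, \|u\|_A \leq 1 \right\},$$
itself proved by testing against functions of the form $u = \mathrm{sgn}(v)\, a(|v|/k)\, \chi_K$ for appropriate compact $K$ and scale $k$, making use of the Fenchel identity $s\,a(s) = A(s) + A^{\ast}(a(s))$.

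The main work is surjectivity. Given $\Lambda \in (E_A(\Omega))^{\ast}$, I would build a representing function $v$ as follows. For every measurable $E \subset \Omega$ of finite Lebesgue measure and bounded in $\mathbb{R}^n$, the characteristic function $\chi_E$ lies in $E_A(\Omega)$, since $A$ is continuous and finite-valued, so $\chi_E$ is approximable in $\|\cdot\|_A$ by mollifications with compact support. Hence the set function $\nu(E) := \Lambda(\chi_E)$ is well defined. Standard monotone convergence in the Orlicz modular shows $\nu$ is countably additive and absolutely continuous with respect to Lebesgue measure; Radon--Nikodym then produces $v \in L^1_{loc}(\Omega)$ with $\Lambda(\chi_E) = \int_E v \, dx$. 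Extending by linearity to simple functions and by continuity using their density in $E_A(\Omega)$ yields $\Lambda(u) = \int_\Omega uv \, dx$ for all $u \in E_A(\Omega)$.

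The hard part is upgrading $v$ from $L^1_{loc}$ to $L_{A^{\ast}}$. The strategy is to apply the variational characterization above to truncations $v_n := v\, \chi_{K_n}\, \chi_{\{|v| \leq n\}}$, each of which is bounded with bounded support and hence trivially in $L_{A^{\ast}}$. For any admissible $u \in E_A(\Omega)$ with $\|u\|_A \leq 1$, the product $u\,\chi_{K_n}\chi_{\{|v|\leq n\}}$ still lies in $E_A$ with norm at most $\|u\|_A$, so
$$\int_\Omega u v_n \, dx = \Lambda\bigl(u \chi_{K_n} \chi_{\{|v| \leq n\}}\bigr) \leq \|\Lambda\|_{(E_A)^{\ast}}.$$
Taking the supremum gives $\|v_n\|_{A^{\ast}} \leq \|\Lambda\|$, and Fatou's lemma in the Orlicz modular then forces $\|v\|_{A^{\ast}} \leq \|\Lambda\|$. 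The main subtlety throughout lies in keeping careful track of which truncations and approximations stay inside $E_A(\Omega)$ as opposed to merely $L_A(\Omega)$; this separability-driven distinction is precisely why the dual of $E_A$ is the full Orlicz space $L_{A^{\ast}}$ rather than its smaller subspace $E_{A^{\ast}}$.
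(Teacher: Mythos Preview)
The paper does not prove this statement; it is quoted as a classical result from Adams' \emph{Sobolev Spaces} and invoked as a black box. Your sketch follows the standard Radon--Nikodym argument one finds in such references, and as an outline it is correct---the H\"older-type embedding, the construction of the representing function via absolute continuity of $E\mapsto\Lambda(\chi_E)$, and the truncation-plus-Fatou upgrade to $L_{A^{\ast}}$ are exactly the ingredients of the textbook proof. There is nothing to compare against in the paper itself, since the authors simply cite the result.
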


\begin{lema}\cite{chae} \label{cchae}
Let $\Omega$ be a bounded domain in
$R^n$ and $f$, $f^k \in L^1(\Omega)$, for all $k$. If $f^k \to f$ strongly in $L^1$
then there exists a $\Delta$-regular $N$-function $A$ such that
$\{f^k\}$ and $f$ are uniformly bounded in $L_A(\Omega)$.
\end{lema}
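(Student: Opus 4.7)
The plan is to deduce equi-integrability from the strong $L^1$ convergence and then construct a $\Delta$-regular $N$-function via a de la Vall\'ee Poussin style argument, with the growth of the density chosen so as to secure $\Delta$-regularity.

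First I would observe that strong $L^1$ convergence $f^k \to f$ forces the family $\mathcal{F} := \{f, f^1, f^2, \ldots\}$ to be equi-integrable. Writing $\omega(M) := \sup_{g\in\mathcal{F}}\int_{\{|g|>M\}}|g|\,dx$, one checks $\omega(M)\to 0$ as $M\to\infty$ by a routine $\varepsilon/2$ split: choose $k_0$ with $\|f^k-f\|_{L^1} < \varepsilon/2$ for $k\geq k_0$, so that $\int_{\{|f^k|>M\}}|f^k| \leq \|f^k-f\|_{L^1} + \int_{\{|f^k|>M\}}|f|$, and then use the trivial equi-integrability of the finite subfamily $\{f,f^1,\dots,f^{k_0}\}$ (absolute continuity of the $L^1$ integral with respect to Lebesgue measure).

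Next I would run a de la Vall\'ee Poussin construction, shaped so that the resulting $N$-function is automatically $\Delta$-regular. Pick an increasing sequence of thresholds $T_j \to \infty$ adapted to $\omega$, and define $a:[0,\infty)\to[0,\infty)$ to be a non-decreasing, right-continuous step function whose value jumps by one at each $T_j$; set $A(t) := \int_0^t a(s)\,ds$. Then $A$ is an $N$-function with $A(t)/t\to\infty$. A layer-cake computation together with Chebyshev's inequality $|\{|g|>T_j\}| \leq \omega(T_j)/T_j$ yields
\[\int_\Omega A(|g|)\,dx \leq T_1|\Omega| + \sum_{j\ge 1} j\,(T_{j+1}-T_j)\,\frac{\omega(T_j)}{T_j},\]
uniformly in $g\in\mathcal{F}$, which will be finite provided the thresholds are chosen so that this series converges.

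The main obstacle is arranging $\Delta$-regularity simultaneously with the convergence of the above series. $\Delta$-regularity of $A$ reduces, for step-function $a$, to a uniform bound on $a(2t)/a(t)$, which in turn demands that the thresholds $T_j$ grow at most geometrically; yet if $\omega$ decays slowly, forcing $T_{j+1}\leq 2T_j$ alone need not control the sum. My remedy is to decouple the two roles: fix a dyadic grid $T_j = 2^j$ as the \emph{shape} of $a$, and let $a$ take a common integer value on several consecutive dyadic intervals, incrementing only when $\omega$ has decayed sufficiently to absorb the contribution. This produces a non-decreasing $a$ with $a(2t)/a(t)$ bounded (hence $A$ is $\Delta$-regular) and $a(t)\to\infty$ (hence $A(t)/t\to\infty$), while keeping $\sup_{g\in\mathcal{F}}\int_\Omega A(|g|)\,dx$ finite.

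Finally, the desired uniform bound in $L_A$ follows from the elementary modular estimate $\|g\|_A \leq \max\bigl(1,\int_\Omega A(|g|)\,dx\bigr)$, which is immediate from the convexity property $A(t/k) \leq A(t)/k$ for $k\geq 1$ and the Luxemburg definition of the Orlicz norm; applied to each $g\in\mathcal{F}$ this yields a uniform bound on $\|f^k\|_A$ and on $\|f\|_A$.
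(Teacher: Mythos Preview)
The paper does not prove this lemma; it is quoted from \cite{chae}, so there is no in-paper argument to compare against. Your overall strategy---equi-integrability from strong $L^1$ convergence, followed by a de la Vall\'ee Poussin construction shaped for $\Delta$-regularity---is the correct one, and the $\Delta$-regularity mechanism (dyadic grid, increments of $a$ of size at most one so that $a(2t)\le a(t)+1$, hence $A(2t)\le 4A(t)$) is sound.

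The gap is in the integral estimate. Your displayed bound passes through Chebyshev, $|\{|g|>T_j\}|\le\omega(T_j)/T_j$, and after your remedy (dyadic $T_j=2^j$, $a=n_j$ on $[2^j,2^{j+1})$ with $n_j$ non-decreasing, unbounded, $n_{j+1}-n_j\in\{0,1\}$) it becomes $\int_\Omega A(|g|)\,dx\le C+\sum_j n_j\,\omega(2^j)$. But equi-integrability only gives $\omega(2^j)\to 0$, not summability: for instance $\omega(M)\sim 1/\log M$ is compatible with a single $L^1$ function (take $\lambda(t)=|\{|g|>t\}|\sim 1/(t(\log t)^2)$), yet then $\sum_j\omega(2^j)\sim\sum_j 1/j=\infty$, and since eventually $n_j\ge 1$ no choice of $(n_j)$ makes $\sum_j n_j\omega(2^j)$ finite. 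The repair is to drop Chebyshev and use the layer-cake identity directly: writing $a=\sum_m \chi_{[M_m,\infty)}$ with $M_m$ the unit jump points, Fubini gives
\[
\int_\Omega A(|g|)\,dx=\sum_m\int_{M_m}^\infty|\{|g|>t\}|\,dt\le\sum_m\omega(M_m),
\]
since $\int_{M}^\infty|\{|g|>t\}|\,dt=\int_{\{|g|>M\}}(|g|-M)\,dx\le\omega(M)$. Now choose $M_m$ with both $\omega(M_m)\le 2^{-m}$ and $M_{m+1}\ge 2M_m$; these are compatible (simply enlarge $M_m$ as needed), the first yields the uniform modular bound and the second secures $\Delta$-regularity. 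The remainder of your argument (the Luxemburg-norm estimate from the modular bound) is correct as stated.
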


\begin{lema}\cite{helena} \label{chelena}
Let $\{u_n\}$ be a sequence of functions uniformly bounded in 
$L^{\infty}(\Omega)$. If $u_n\rightarrow u$ in
$L^1(\Omega)$, then $u_n\rightarrow u$ in $L_A(\Omega)$, for any $N$-function $A$.
\end{lema}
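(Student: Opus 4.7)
The plan is to work directly from the definition of the Orlicz norm. Set $v_n := u_n - u$ and let $M := \sup_n \|u_n\|_{L^{\infty}(\Omega)}$. Since $u_n \to u$ in $L^1(\Omega)$, a subsequence converges almost everywhere, and hence $|u| \leq M$ a.e., so $|v_n| \leq 2M$ a.e.\ for every $n$. It suffices to show that, for every $\varepsilon > 0$, one has $\int_{\Omega} A(|v_n(x)|/\varepsilon)\, dx \leq 1$ for all sufficiently large $n$; the infimum defining $\|\cdot\|_A$ then gives $\|v_n\|_A \leq \varepsilon$ and thus $v_n \to 0$ in $L_A(\Omega)$.

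Fix $\varepsilon > 0$. Using the continuity of $A$ at $0$ with $A(0) = 0$, I first choose $\delta > 0$ so small that $A(\delta/\varepsilon)\,|\Omega| \leq 1/2$. I then split the integral over $\Omega$ according to the sets $E_n := \{|v_n| > \delta\}$ and $\Omega \setminus E_n$. On $\Omega \setminus E_n$ the monotonicity of $A$ gives $A(|v_n|/\varepsilon) \leq A(\delta/\varepsilon)$, so this part contributes at most $1/2$. On $E_n$ the uniform pointwise bound $|v_n| \leq 2M$ gives $A(|v_n|/\varepsilon) \leq A(2M/\varepsilon)$, a constant independent of $n$, so that this part contributes at most $A(2M/\varepsilon)\,|E_n|$. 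By the Chebyshev inequality, $|E_n| \leq \|v_n\|_{L^1(\Omega)}/\delta$, which tends to zero by hypothesis, so for $n$ large enough this second contribution is also at most $1/2$. Summing, $\int_{\Omega} A(|v_n|/\varepsilon)\,dx \leq 1$ for large $n$, which is the required bound.

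I do not expect a serious obstacle: the uniform $L^{\infty}$ bound is precisely what keeps $A$ from being evaluated at unbounded arguments, so that the splitting above is dominated by the finite constant $A(2M/\varepsilon)$ on $E_n$. In particular, no growth restriction on $A$, such as the $\Delta$-regularity needed in Lemma \ref{cchae}, is required here, because $A$ is only ever evaluated on the compact interval $[0, 2M/\varepsilon]$. The two ingredients used are therefore completely standard: continuity of $A$ at the origin to handle the small-values region, and convergence in $L^1$ (via Chebyshev) to handle the large-values region.
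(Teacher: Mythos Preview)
Your argument is correct. The paper does not give its own proof of this lemma: it is stated with a citation to \cite{helena} and used as a black box, so there is no proof in the paper to compare against. Your proof is a clean and standard one; the only implicit assumption you should make explicit is that $|\Omega|<\infty$ (needed when you choose $\delta$ with $A(\delta/\varepsilon)\,|\Omega|\le 1/2$), which is consistent with how the lemma is applied in the paper (on bounded balls and on the bounded domain $\Omega$).
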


\section{ Weak stability of the semigeostrophic equations in physical space}

We now turn to the main objective of this article. We consider a sequence of initial
potential vorticities, converging strongly in $L^1$ to a given limit vorticity. We 
would like to understand the convergence properties of the corresponding Lagrangian
solutions in physical space. Our motivation for considering this problem was, originally, 
to try to extend Cullen and Feldman's construction to solutions of the semigeostrophic equations 
with measures as potential vorticities. To do so we intended to approximate such solutions 
by smoother ones and, hence, we needed to understand how the corresponding Lagrangian solutions 
behaved. As it turns out this approach to construct Lagrangian solutions for measure-valued 
potential vorticities does not work; this will be  made clear by means of a counterexample, in Section 5. 
Instead, we have established a weak stability, or continuity property, of Lagrangian
solutions with respect to integrable perturbations of an $L^1$ initial potential vorticity. Weak stability
of weak solutions of the semigeostrophic equations, in dual formulation, has already been established 
by G. Loeper in \cite{loeper}.
 
Throughout the remainder of this section we fix $\alpha_0 \in L^1(\real^3)$ together with a sequence $\{\alpha_0^n\} \subset L^1 (\real^3)$ such that 
$\alpha_0^n \to \alpha_0$ strongly in $L^1(\real^3)$. In addition, we assume that $\alpha_0$ and the sequence $\{\alpha_0^n\}$ are
compactly supported, with supports contained in a ball $B(0,R_0)$.

Using Lemma \ref{cchae}, as in \cite{helena}, we have that there exists a $\Delta$-regular $N$-function $A$ such that $\alpha_0^n$, $\alpha_0$ are uniformly bounded in $L_A(\real^3)$.

Let  $\alpha^n = \alpha^n(y,t)$ be a weak solution of the semigeostrophic equations in dual formulation with initial potential vorticity $\alpha_0^n$. Consider the corresponding modified pressures $P^n$, $P_0^n$, defined in the physical space    $\Omega$. Denote by  $\Phi^n$  the Lagrangian flow in dual space given in Proposition \ref{ambrosio}. Finally, consider the corresponding Lagrangian flows in physical space, 
\begin{equation}
F^n_t:=\nabla (P^{n}_t)^{\ast} \circ \Phi^n_t \circ\nabla P^n_0, 
\label{2} \end{equation} 
as obtained in Theorem \ref{theo}, equation \eqref{flu}.

In the proof of Theorem \ref{existheoMaut}, case $q=1$, (see \cite{helena}), it was shown that a subsequence of $\{\alpha^{n}\}$, $\{P^{n}\}$, 
$\{(P^{n})^{\ast}\}$ exists, which we do not re-label, together with a weak solution $\alpha=\alpha(x,t)$ of the semigeostrophic equations in dual formulation (with initial potential vorticity $\alpha_0$), such that the following hold, for each $t\in [0,T)$:
\begin{equation}
\begin{array}{ll}
\alpha^{n }(\cdot,t)\rightharpoonup\alpha(\cdot,t) & w^{\ast}-L_A(B(0,R(T)))\\
P^{n }(\cdot,t)\longrightarrow P(\cdot,t) & W^{1,r}(\Omega)\\
P^{n }_0\longrightarrow P_0 &W^{1,1}(\Omega)\\
(P^{n })^{\ast}(\cdot,t)\longrightarrow P^{\ast}(\cdot,t)& W^{1,1}_{loc}(\mathbb{R}^3)\\
\nabla (P^{n })^{\ast}(\cdot,t)\longrightarrow \nabla P^{\ast}(\cdot,t)&(E_{A^{\ast}})_{loc}(\mathbb{R}^3).
\end{array}
\label{1}
\end{equation}
In the proof of the convergence of $\nabla (P^{n })^{\ast}(\cdot,t)$ to $\nabla P^{\ast}(\cdot,t)$ one uses Theorem \ref{csob} and Lemma \ref{chelena}.

We fix, throughout the remainder of this section, such a subsequence.

Above, $P(\cdot,t)$, $P_0$ are the modified pressures corresponding to 
$\alpha(\cdot,t)$ and $\alpha_0$, and $R(T)$ is given in Theorem \ref{existheoMaut}, 
item (i). Let $\Phi$ be the Lagrangian flow in dual space as in Proposition \ref{ambrosio} and let 
\begin{equation} \label{limitF}
F_t:=\nabla P^{\ast}_t \circ \Phi_t \circ\nabla P_0,
\end{equation}
be the Lagrangian flow in physical space obtained in Theorem \ref{theo}, see \eqref{flu}.

Our main result is the following.

\begin{teo} \label{mainthm}
There exists a (further) subsequence $\{F^{n_k}_t\} \subset \{F^n_t\}$ such that, for almost every  $0\leq t<T$, we have:
\[F^{n_k}_t \longrightarrow F_t,\;\;\mbox{ strongly in }\;\; L^r (\Omega), \;\;\mbox{ as }\;\; k \to \infty,\]
for all $r\in [1,\infty)$.
\end{teo}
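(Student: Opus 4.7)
The plan is to decompose the error as
\begin{align*}
F^n_t - F_t &= \bigl[\nabla (P^n_t)^* - \nabla P^*_t\bigr]\circ \Phi^n_t \circ \nabla P^n_0 \\
&\quad + \nabla P^*_t \circ \Phi^n_t \circ \nabla P^n_0 - \nabla P^*_t \circ \Phi_t \circ \nabla P^n_0 \\
&\quad + \nabla P^*_t \circ \Phi_t \circ \nabla P^n_0 - \nabla P^*_t \circ \Phi_t \circ \nabla P_0,
\end{align*}
and to show that each summand tends to zero in $L^1(\Omega)$ for a.e.\ $t$, along a common subsequence. Because every $F^n_t$ takes values in $\Omega\subset B(0,S)$, the sequence is uniformly bounded in $L^\infty$, so $L^1$-convergence upgrades automatically to $L^r$-convergence for every $r<\infty$ by interpolation. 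Two tools will do the work: Ambrosio's stability theorem for Lagrangian flows of $BV$ vector fields, which will give $\Phi^n_t\to\Phi_t$; and the measure-preserving character of every map appearing inside the compositions, which converts $L^1(\Omega)$-norms into integrals weighted by $\alpha^n(\cdot,t)$ or $\alpha_0^n$.

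For the first summand, the identities $\nabla P^n_0\sharp\chi_\Omega=\alpha_0^n$ and $\Phi^n_t\sharp\alpha_0^n=\alpha^n(\cdot,t)$ (from Proposition \ref{ambrosio}(vii)) give, by change of variables,
\[
\int_\Omega \bigl|(\nabla (P^n_t)^* - \nabla P^*_t)\circ \Phi^n_t \circ \nabla P^n_0\bigr|\, dx = \int_{B(0,R(T))} \bigl|\nabla (P^n_t)^* - \nabla P^*_t\bigr|\, \alpha^n(X,t)\, dX,
\]
which vanishes as $n\to\infty$ by the Orlicz-H\"older inequality implied by Theorem \ref{csob}, the uniform $L_A$-bound on $\{\alpha^n(\cdot,t)\}$, and the convergence $\nabla (P^n_t)^*\to\nabla P^*_t$ in $(E_{A^*})_{loc}$ from \eqref{1}. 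For the remaining two summands, the essential input is $\Phi^n_t\to\Phi_t$. The modified velocities $\tilde U^n$ associated with $(P^n)^*$ are uniformly bounded in $L^\infty\cap L^\infty([0,T); BV_{loc})$, are divergence-free, and $\tilde U^n\to\tilde U$ in $L^1_{loc}(\real^3\times[0,T))$ as a consequence of \eqref{1}; Ambrosio's stability theorem in \cite{amb} then produces, along a subsequence, $\Phi^n\to\Phi$ in $L^1_{loc}(\real^3\times[0,T))$, and after a further subsequence $\Phi^n_t\to\Phi_t$ pointwise a.e.\ in $\real^3$ for a.e.\ $t$.

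The main obstacle will be extracting composed convergence despite $\nabla P^*_t$ being only essentially bounded, so that composition is not automatically continuous under a.e.\ convergence of the arguments. I would handle the second and third summands by a common approximation scheme: given $\varepsilon>0$, pick $g_\varepsilon\in C_c(\real^3)$ with $\|\nabla P^*_t - g_\varepsilon\|_{L_{A^*}(B(0,R(T)))}<\varepsilon$; this is legitimate because bounded, compactly supported functions lie in $E_{A^*}$, which is the $L_{A^*}$-closure of $C_c$. For continuous $g_\varepsilon$, the a.e.\ convergences $\Phi^n_t\to\Phi_t$ and $\nabla P^n_0\to\nabla P_0$ (the latter extracted from the $W^{1,1}$ convergence in \eqref{1}) give $g_\varepsilon\circ \Phi^n_t\circ \nabla P^n_0 \to g_\varepsilon\circ \Phi_t\circ \nabla P_0$ a.e.\ on $\Omega$, hence in $L^1(\Omega)$ by dominated convergence. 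Every approximation error reduces, by change of variables, to an integral of $|\nabla P^*_t - g_\varepsilon|$ against $\alpha^n(\cdot,t)$, against $\alpha_0^n$, or against $\Phi_t\sharp\alpha_0^n$ (which tends to $\alpha(\cdot,t)\in L_A$ since $\Phi_t$ is measure-preserving), each of which is uniformly small in $n$ by the Orlicz-H\"older bound combined with the uniform $L_A$-bounds on $\{\alpha^n(\cdot,t)\}$ and $\{\alpha_0^n\}$ and the $L^1$-convergence $\alpha_0^n\to\alpha_0$. A diagonal extraction in $\varepsilon\to 0$ and $n\to\infty$ then yields $F^{n_k}_t\to F_t$ in $L^1(\Omega)$ for a.e.\ $t$, completing the proof.
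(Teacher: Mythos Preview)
Your decomposition and the treatment of the first summand are correct and essentially match the paper's handling of what it calls $I_1^1$. There are, however, two gaps.

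The minor one concerns the stability of the dual flows. Theorem~6.5 in \cite{amb} is stated for \emph{smooth} approximating fields, and the paper explicitly remarks that the convergence $\tilde U^n\to\tilde U$ in $L^1_{loc}$ alone ``is not enough \dots\ due to the fact that we cannot control $\nabla\tilde U^n$''. Your observation that the $\tilde U^n$ enjoy a uniform $BV_{loc}$ bound (from the convexity of $(P^n_t)^*$ together with $\|\nabla(P^n_t)^*\|_{L^\infty}\leq S$) is correct, but turning this into convergence of the regular Lagrangian flows still requires an argument beyond a direct citation of \cite{amb}; the paper supplies one via the mollification-plus-diagonal scheme of Lemma~\ref{Phin}. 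This is reparable.

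The serious gap is the assertion that, for continuous $g_\varepsilon$, the a.e.\ convergences $\Phi^n_t\to\Phi_t$ and $\nabla P^n_0\to\nabla P_0$ give
\[
g_\varepsilon\circ\Phi^n_t\circ\nabla P^n_0 \longrightarrow g_\varepsilon\circ\Phi_t\circ\nabla P_0 \quad\text{a.e.\ in }\Omega.
\]
Continuity of $g_\varepsilon$ reduces this to $\Phi^n_t\circ\nabla P^n_0 \to \Phi_t\circ\nabla P_0$ a.e., and that is \emph{not} a consequence of the two separate a.e.\ convergences: almost-everywhere convergence does not compose. The piece $\Phi_t\circ\nabla P^n_0 - \Phi_t\circ\nabla P_0$ is precisely where the difficulty lies, because $\Phi_t$ is merely Borel, and your approximation of $\nabla P^*_t$ by $g_\varepsilon$ does nothing here since $\nabla P^*_t$ sits on the \emph{outside} of the composition. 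The paper addresses exactly this point (its term $I_3$) by applying Lusin's theorem to $\Phi_t$ itself: one finds $f^\varepsilon\in C^0$ with $\Phi_t=f^\varepsilon$ outside a set $E^\varepsilon$ of measure less than $\varepsilon$, uses continuity of $f^\varepsilon$ together with $\nabla P^n_0\to\nabla P_0$ a.e., and controls the remainder $\int_\Omega |\Phi_t-f^\varepsilon|\circ\nabla P^n_0\,dx = \int_{E^\varepsilon}|\Phi_t-f^\varepsilon|\,d\alpha_0^n$ via the uniform integrability of $\{\alpha_0^n\}$ coming from the strong $L^1$ convergence. Without an argument of this type for $\Phi_t$ (not just for $\nabla P^*_t$), your proof does not close.
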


Before we present the proof of the theorem, we require the following 
auxiliary result.
 
\begin{lema} \label{Phin} For each $R>0$, we have 
$$\lim_{n\rightarrow\infty}\int_{B(0,R)}\sup_{t\in
[0,T]}\vert\Phi^n(X,t)-\Phi(X,t)\vert dX=0.$$
\end{lema}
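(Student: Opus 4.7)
The plan is to invoke the stability theorem of Ambrosio \cite{amb} for regular Lagrangian flows of $BV$ vector fields. Since $\Phi^n$ and $\Phi$ are, by construction, the unique Lagrangian flows associated to the modified velocities $\tilde U^n$ and $\tilde U$ defined via \eqref{2.48}, the task reduces to verifying three hypotheses: a uniform $L^{\infty}$ bound, a uniform $L^1([0,T); BV_{\loc})$ bound with vanishing distributional divergence, and strong $L^1_{\loc}(\real^3 \times [0,T))$ convergence $\tilde U^n \to \tilde U$.

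The first two ingredients come essentially for free from the construction. The uniform bound $\|\tilde U^n(\cdot,t)\|_{L^{\infty}} \leq S + R(T) + 1$ is recorded immediately before Proposition \ref{ambrosio}, and $\mbox{div } \tilde U^n = 0$ is built into the definition \eqref{2.48}: the Hessian contribution vanishes by symmetry against the skew matrix $J$, and $H$ contributes nothing because its $j$-th component depends only on $X_j$ while $J_{jj}=0$. For the $BV_{\loc}$ estimate, each $(P^n)^{\ast}(\cdot,t)$ is convex, so $D^2 (P^n)^{\ast}$ is a non-negative matrix-valued Radon measure; since $\|\nabla (P^n)^{\ast}(\cdot,t)\|_{L^{\infty}} \leq S$ uniformly by Theorem \ref{existheoMaut}(iii), a standard divergence-theorem estimate controls the total variation of $D^2 (P^n)^{\ast}$ on any ball by a constant depending only on $S$ and the radius, uniformly in $n$ and $t$.

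The heart of the matter, and the step I expect to require the most care, is the strong $L^1_{\loc}$ convergence of the velocities on $\real^3 \times [0,T)$. For each fixed $t \in [0,T)$, the convergence statements in \eqref{1} give $\nabla (P^n)^{\ast}(\cdot,t) \to \nabla P^{\ast}(\cdot,t)$ in $(E_{A^{\ast}})_{\loc}(\real^3)$, which in particular yields $L^1_{\loc}(\real^3)$ convergence for each fixed $t$. The uniform pointwise bound $|\nabla (P^n)^{\ast}| \leq S$ supplies a constant dominating function on any compact product set, so Lebesgue's dominated convergence theorem applied in the $t$ variable upgrades this pointwise-in-$t$ convergence to convergence in $L^1_{\loc}(\real^3 \times [0,T))$. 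Since $\tilde U^n - \tilde U = -J[\nabla (P^n)^{\ast} - \nabla P^{\ast}]$ (the cutoff $H$ is independent of $n$), the same $L^1_{\loc}$ convergence transfers to the velocities themselves.

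With all three hypotheses verified, Ambrosio's stability theorem for Lagrangian flows delivers precisely the assertion of the lemma, namely
$$\lim_{n \to \infty} \int_{B(0,R)} \sup_{t \in [0,T]} |\Phi^n(X,t) - \Phi(X,t)| \, dX = 0.$$
The only nontrivial bookkeeping is the passage from pointwise-in-$t$ convergence of $\nabla (P^n)^{\ast}$ in \eqref{1} to joint strong convergence of the velocities in space-time; once this is secured, the abstract stability machinery of \cite{amb} does the rest.
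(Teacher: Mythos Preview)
Your approach is sound in substance but takes a genuinely different route from the paper's. You invoke Ambrosio's stability theorem \emph{directly} on the sequence $\tilde U^n \to \tilde U$, supplying a uniform $L^1([0,T);BV_{\loc})$ bound on the approximants via the convexity of $(P^n)^{\ast}$ (so that $D^2(P^n)^{\ast}$ is a nonnegative matrix-valued measure whose trace, hence total variation, on any ball is controlled by $S$ and the radius alone). This observation is correct, and the paper does not exploit it.

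By contrast, the paper explicitly asserts that Theorem~6.5 of \cite{amb} requires control on $\nabla\tilde U^n$ that the authors ``cannot'' obtain, and therefore introduces a second layer of mollification $\tilde U^{n,m}=J[H-\nabla(P^n_t)^{\ast}\ast\eta^m]$. The mollified fields are Lipschitz in space (with bounds depending on $n,m$), so Theorem~6.5 applies twice: once to give $\Phi^{n,m}\to\Phi^n$ as $m\to\infty$ for each fixed $n$, and once to give $\Phi^{n,m}\to\Phi$ as $n,m\to\infty$. A diagonal choice $m=m(n)$ then yields $\Phi^n\to\Phi$.

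What each approach buys: the paper's argument is safe against the precise hypotheses of Theorem~6.5 in \cite{amb}, which (as the authors read it) demands smooth or Lipschitz approximants rather than merely $BV$ ones; the cost is the extra mollification and diagonal extraction. Your argument is cleaner and avoids the double index, but it presumes a formulation of the stability theorem in which uniform $BV_{\loc}$ (or even just: each $\tilde U^n$ individually $BV$, divergence-free, uniformly bounded, with the \emph{limit} $\tilde U$ in $BV$) suffices. Such formulations certainly exist in the later literature on regular Lagrangian flows, so your proof is correct; but if you are citing Theorem~6.5 of \cite{amb} verbatim, you should check that its hypotheses are met by $BV$ approximants, or else cite the appropriate generalization.
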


\begin{proof}
For each $n$, recall that $\displaystyle{\Phi^n_t}$ is the Lagrangian flow
in dual space associated with the vector field
$\displaystyle{\tilde{U}^n(X,t)=J[H(X)-\nabla (P^{n}_t)^{\ast})]}$.
Since we have that $\displaystyle{(P^{n})^{\ast}(\cdot,t)\rightarrow P^{\ast}(\cdot,t)}$
in $W^{1,1}_{loc}(\mathbb{R}^3)$, it follows that 
\begin{equation}
\tilde{U}^n(X,t)\longrightarrow
\tilde{U}(X,t)\;\mbox{em}\;L^1_{loc}(\mathbb{R}^3). \label{3}
\end{equation}

However, such a condition is not enough to obtain 
the convergence of $\displaystyle{\{\Phi^n_t\}}$, due to the fact that we cannot control $\nabla \tilde{U}^n$. This is 
required in Ambrosio's stability result, namely Theorem 6.5 of \cite{amb}. 

We consider, instead, a family of aproximations of $\tilde{U}^n$ given by
\begin{equation}
\tilde{U}^{n,m}(X,t)=J[H(X)-(\nabla (P^{n}_t)^{\ast}\ast \eta^m)(X,t)],
\label{4}\end{equation} where $\eta^m$ is a standard mollifier.

Now we have: 
\begin{equation}
\begin{array}{l}
\tilde{U}^{n,m} \in C([0,T] \times \mathbb{R}^3,\mathbb{R}^3),\\
{\sup_{m} \Vert \tilde{U}^{n,m}\Vert_{L^{\infty}(\mathbb{R}^3\times[0,T],\mathbb{R}^3)}<\infty},\\
\mbox{div }\tilde{U}^{n,m}=0,\\ 
\Vert \nabla \tilde{U}^{n,m}\Vert_{L^{\infty}([0,T]\times B(0,R),\mathbb{R}^3)}\leq
C(n,m,R)<\infty,\\
\tilde{U}^{n,m}\longrightarrow \tilde{U}^{n}\;\mbox{in}\; L^1_{loc}(\mathbb{R}^3\times[0,T)).
\end{array}\label{5}
\end{equation}

Let $\Phi^{n,m}(X,t)$ be the Lagrangian flow associated to $\tilde{U}^{n,m}$. It follows from Theorem 6.5 in \cite{amb} that \begin{equation} \label{6}
\lim_{m\rightarrow\infty}\int_{B(0,R)} \; 
\sup_{t\in [0,T]}\vert\Phi^{n}(X,t)-\Phi^{n,m}(X,t)\vert dX=0,\;\forall R>0.
\end{equation}

Note  that 
$\tilde{U}^{n,m}(X,t)\rightarrow \tilde{U}(X,t)$ in $ L^1_{loc}(\mathbb{R}^3\times[0,T))$ when
$m,n\rightarrow \infty$. To see this it is enough to observe that, for any $R>0$, we have
$$\Vert \nabla (P^{n}_t)^{\ast}\ast \eta^m-\nabla
P^{\ast}_t\Vert_{L^1(B(0,R))} 
\leq\Vert \eta^m\Vert_{L^1(B(0,R))}
\Vert \nabla (P^n_t)^{\ast}-\nabla P^{\ast}_t\Vert_{L^1(B(0,R))} $$
$$+\Vert\nabla P^{\ast}_t\ast\eta^m-\nabla P^{\ast}_t\Vert_{L^1(B(0,R))}\stackrel{n,m\rightarrow\infty}{\longrightarrow} 0.$$

Hence, we also have
\begin{equation} \label{gap}
\lim_{m,n\rightarrow\infty}\int_{B(0,R)} \; \sup_{t\in
[0,T]}\vert\Phi(X,t)-\Phi^{n,m}(X,t)\vert dX=0,\;\forall R>0.
\end{equation}

Given \eqref{6}, it is possible to choose a subsequence $m=m(n) > n$ such that 

\begin{equation}
\lim_{n\rightarrow\infty}\int_{B(0,R)}\;
\sup_{t\in[0,T]}\vert\Phi^n(X,t)-\Phi^{n,m(n)}(X,t)\vert dX=0,\;\forall R>0.
\label{7}\end{equation}

We conclude, from \eqref{7} and \eqref{gap}, that
$$\lim_{n\rightarrow\infty}\int_{B_R}\sup_{t\in
[0,T]}\vert\Phi^n(X,t)-\Phi(X,t)\vert dX=0,\;\forall R>0,$$ which concludes the proof. 
 
\end{proof}

\begin{obs} Once we take into account the expression
(\ref{flu}),  Proposition 1.1 {\it{(v)}},
and (23), we see that we may assume in what follows that the flow $\Phi(X,t)$
is associated with the vector field $U(X,t)=J[X-\nabla P^{\ast}(X,t)]$.
\end{obs}

With this lemma we are now ready to give the proof of Theorem \ref{mainthm}.

\vspace{0.4cm}

\begin{proofmainthm} Let us first prove our result for the case $r=1$.

We note that, for each $0\leq t<T$, we have 
\begin{equation}
\begin{array}{l}
\displaystyle{\int_{\Omega}\vert F^n_t(x)-F_t(x)\vert 
\,dx}=\\
\displaystyle{\int_{\Omega}\vert\nabla ( P^n_t)^{\ast} \circ
\Phi^n_t \circ\nabla P^n_0(x)-\nabla P^{\ast}_t \circ \Phi_t
\circ\nabla P_0(x)\vert  \,dx}\\ 
\leq
\displaystyle{ \int_{\Omega}\vert\nabla ( P^n_t)^{\ast}
\circ \Phi^n_t \circ\nabla P^n_0(x)-\nabla (
P^n_t)^{\ast} \circ \Phi_t \circ\nabla P_0^n(x)\vert\, dx }\\
 +\displaystyle{\int_{\Omega}\vert\nabla ( P^n_t)^{\ast} \circ
\Phi_t \circ\nabla P^n_0(x)-\nabla P^{\ast}_t\circ \Phi_t
\circ\nabla
P_0^n(x)\vert\, dx} \\
+ \displaystyle{\int_{\Omega}\vert\nabla P^{\ast}_t \circ
\Phi_t \circ\nabla P_0^n(x)-\nabla P^{\ast}_t \circ \Phi_t
\circ\nabla
P_0(x)\vert\, dx} \\
\equiv  I_1+I_2+I_3 . \end{array}\end{equation}

We will show that each of these integrals vanish as $n$ tends to infinity, passing to subsequences as needed. 

Let us begin by considering $I_1$. Using that $\nabla P_0^n \sharp \chi_{\Omega} = \alpha_0^n$ we have:
\begin{equation}
\begin{array}{l}
\displaystyle{\int_{\Omega}\vert\nabla ( P^n_t)^{\ast} \circ \Phi^n_t
\circ\nabla P^n_0(x)-\nabla ( P^n_t)^{\ast} \circ \Phi_t \circ\nabla
P_0^n(x)\vert\,
dx}=\\\;\;\;\;\;\;\;\;\;\;\;\;\;\;\;\;\;=\displaystyle{\int_{\mathbb{R}^3}\vert\nabla (
P^n_t)^{\ast} \circ \Phi^n_t(y) -\nabla ( P^n_t)^{\ast} \circ
\Phi_t(y) \vert\, \alpha^n_0(y) dy}\\
\;\;\;\;\;\;\;\;\;\;\;\;\;\;\;\;\leq
 \displaystyle{ \int_{\mathbb{R}^3}\vert\nabla (
P^n_t)^{\ast} \circ \Phi^n_t(y) -\nabla P^{\ast}_t \circ
\Phi_t^n(y) \vert\, \alpha^n_0(y) dy }+\\
\;\;\;\;\;\;\;\;\;\;\;\;\;\;\;\;\;\;\;\;\;\;\;\;\;\;\;\;\;\;\;\;\;\;\;\;\;\;\;\;\; +\displaystyle{\int_{\mathbb{R}^3}\vert \nabla
P^{\ast}_t \circ \Phi^n_t(y) -\nabla ( P^n_t)^{\ast} \circ \Phi_t(y)
\vert\,
\alpha^n_0(y) dy} \\
\;\;\;\;\;\;\;\;\;\;\;\;\;\;\;\; \equiv I^1_1+I^2_1 .\end{array}
\end{equation}

From Proposition 1.1 (vii) we see that $\alpha^n_t=\Phi^n_t\#
\alpha_0^n$ and, since $L_A$ is a rearrangement invariant space, it follows that $\Vert
\alpha_0^n\Vert_{L_A}=\Vert\alpha_t^n\Vert_{L_A}$, for each $n$ and for each $t\in[0,T)$. Therefore, 
\begin{equation}
\begin{array}{ll}
I^1_1&=\displaystyle{\int_{\mathbb{R}^3}\vert\nabla ( P^n_t)^{\ast}
\circ \Phi^n_t(y) -\nabla P^{\ast}_t \circ \Phi_t^n(y) \vert\,
\alpha^n_0(y) dy}\\
&=\displaystyle{\int_{\mathbb{R}^3}\vert\nabla ( P^n_t)^{\ast} (z)
-\nabla P^{\ast}_t (z) \vert\, \alpha^n_t(z) dz}\\
&\leq\displaystyle{\Vert\nabla (P^n_t)^{\ast}  -\nabla P^{\ast}_t
\Vert_{E_{A^{\ast}}}\Vert\alpha^n_t\Vert_{L_A}}\\
&=\displaystyle{\Vert\nabla (P^n_t)^{\ast}  -\nabla P^{\ast}_t
\Vert_{E_{A^{\ast}}}\Vert\alpha^n_0\Vert_{L_A}}\stackrel{n\rightarrow\infty}{\longrightarrow}
0,
\end{array}
\end{equation}  where we have used  (\ref{1}), Theorem \ref{csob} and the boundedness of $\alpha_0^n$ in $L_A$.

As for $I^2_1$, we have:
\begin{equation}
\begin{array}{ll}
I^2_1&=\displaystyle{\int_{\mathbb{R}^3}\vert\nabla P^{\ast}_t
\circ \Phi^n_t(y) -\nabla ( P^n_t)^{\ast} \circ \Phi_t(y) \vert\,
\alpha^n_0(y) dy}\\
&\leq \displaystyle{\int_{\mathbb{R}^3}\vert\nabla
P^{\ast}_t \circ \Phi^n_t(y) -\nabla P^{\ast}_t \circ \Phi_t(y)
\vert\, \alpha^n_0(y)
dy} + \\
&\;\;\;\;\;\;\;\;\;\;\;\;\;\;\;\;\; +\displaystyle{\int_{\mathbb{R}^3}\vert\nabla
P^{\ast}_t\circ \Phi_t(y) -\nabla ( P^n_t)^{\ast} \circ \Phi_t(y)
\vert\,
\alpha^n_0(y) dy} \\
& \equiv I^{2,1}_1+I^{2,2}_1 .
\end{array}
\end{equation}

Consider the integral $I^{2,1}_1$. Since $\alpha_0^n \to \alpha_0$ strongly in $L^1$, it is easy to see that the proof that 
$I^{2,1}_1$ tends to zero as $n \to \infty$ reduces, by  Lebesgue's dominated
convergence theorem,  to showing that, for all $t$,
\begin{equation}\nabla P^{\ast}_t
\circ \Phi^n_t(y) -\nabla P^{\ast}_t \circ
\Phi_t(y)\longrightarrow
0,\;\;n\rightarrow\infty \;\;\mbox{ a.e. } y \in \real^3.\label{3.14}\end{equation}
At this point we must pass to a further subsequence. We have, by Lemma \ref{Phin}, that 
\[g^n := g^n(y) = \sup_{0<t<T} |\Phi^n(y,t) - \Phi(y,t)| \to 0 \mbox{ strongly in } L^1_{\loc}.\]
From this it follows that there exists a subsequence, $\{g^{n_k}\}$, which converges, a.e. $y \in \real^3$, to $0$ as $k \to \infty$.
Hence, for every $0\leq t<T$, we have 
\begin{equation} \label{graah}
\Phi^{n_k}_t(y)-\Phi_t(y)\rightarrow 0 \mbox{ a.e. } y \in \real^3 \mbox{ as } k \to \infty.
\end{equation}

Now, since $P^{\ast}_t$ is convex, it follows that $\nabla P^{\ast}_t$ 
is almost everywhere differentiable (see, for instance, \cite{evansgari}), and hence, continuous except for a set of Lebesgue 
measure zero, say $N \subset \real^3$. Given \eqref{graah} it is enough to show, therefore, that, for almost all $y \in \real^3$, $\nabla P^{\ast}_t$ is 
continuous at $\Phi_t(y)$. To see this we note that
$$\vert \{y\in \mathbb{R}^3;\;\Phi_t(y)\in
N\}\vert  =\vert \{y\in \mathbb{R}^3;\;y\in\Phi^{\ast}_t(N)\}\vert 
= \vert N \vert = 0,$$ 
in view of the fact that $\Phi^{\ast}_t$ preserves Lebesgue measure. Therefore, $\Phi_t(y)$ is a continuity point for
$\nabla P^{\ast}_t$, for almost all $y$, as desired. 

Hence,  
\begin{equation}\int_{\real^3}\vert\nabla P^{\ast}_t \circ \Phi^{n_k}_t(y)
-\nabla P^{\ast}_t \circ \Phi_t(y) \vert\, \alpha^{n_k}_0(y) \,dy\rightarrow 0 .\label{14}\end{equation}

The analysis of $I^{2,2}_1$ is similar. We have that $\nabla P^{\ast}_t(\mathbb{R}^3)$, 
$\nabla (P^n_t)^{\ast}(\mathbb{R}^3) \subset B(0,S),$ $\forall t,n$, $\nabla (P^n_t)^{\ast}\rightarrow \nabla P^{\ast}_t$ strongly in $L^1_{\loc}$, 
hence $\nabla (P^n)^{\ast} \to \nabla P^{\ast}$ strongly in $L^1_{\loc}(\real^3\times\real_+)$. Thus, we may pass to a subsequence, chosen independently of $t$, and which we do not re-label, so that
\[\nabla (P^{n_k}_t)^{\ast} \to \nabla P_t^{\ast}\;\mbox{ a.e. } y \in \real^3,\]
as $k \to \infty$, for almost every $0 \leq t < T$. 

Using this, together with the fact that $\Phi_t$ is measure preserving, we may conclude as before that  
$$\nabla ( P^{\ast}_t)^{n_k} \circ \Phi_t(y)-\nabla P^{\ast}_t \circ
\Phi_t(y)\longrightarrow 0, \mbox{ a.e. }y \in \real^3,$$
a.e. $0 \leq t < T$. This, together with the strong convergence in $L^1$ of $\alpha_0^n\rightarrow
\alpha_0$, and Lebesgue's dominated convergence theorem, yield,  
\begin{equation}\int_{\real^3}\vert\nabla ( P^{\ast}_t)^{n_k} \circ \Phi_t(y)
-\nabla P^{\ast}_t \circ \Phi_t(y) \vert\, \alpha^{n_k}_0(y)
dy\rightarrow 0 .\label{15}
\end{equation}
\vskip10pt 
From (\ref{14}) and (\ref{15}) we have that $I^2_1\rightarrow
0$, which concludes the analysis of $I_1$.

Next we consider $I_2$. Using the fact that $\nabla P_0^n\#
\chi_\Omega=\alpha_0^n$, we have that,
\begin{equation}
\begin{array}{l}
\displaystyle{\int_{\Omega}\vert\nabla ( P^n_t)^{\ast} \circ \Phi_t
\circ\nabla P^n_0(x)-\nabla P^{\ast}_t \circ \Phi_t \circ\nabla
P_0^n(x)\vert\, dx}=\\
\;\;\;\;\;\;\;\;\;\;\;\;\;\;=\displaystyle{\int_{\mathbb{R}^3}\vert\nabla (
P^n_t)^{\ast} \circ \Phi_t (y)-\nabla P^{\ast}_t \circ \Phi_t
(y)\vert\,\alpha_0^n(y) dy}
\end{array}
\end{equation}
which is the same as $I^{2,2}_1$. Hence, from (\ref{15}), it follows that, passing to the appropriate subsequence, $I_2\rightarrow 0$. 

Finally, we consider the last integral,  
$$\begin{array}{ll}
I_3&=\displaystyle{\int_{\Omega}\vert\nabla P^{\ast}_t \circ
\Phi_t \circ\nabla P_0^n(x)-\nabla P^{\ast}_t \circ \Phi_t
\circ\nabla P_0(x)\vert\, dx}.
\end{array}$$
Now, since $\nabla P^{\ast}$ is bounded, it is enough, by the Lebesgue dominated convergence theorem, to prove that 
\begin{equation}\nabla P^{\ast}_t
\circ \Phi_t \circ\nabla P_0^{n_k}(x)-\nabla P^{\ast}_t \circ \Phi_t
\circ\nabla P_0(x)\rightarrow 0 \mbox{ a.e. } x \in \Omega.
\label{44}
\end{equation}
To this end we, once more, pass to a subsequence for which $\nabla P_0^{n_k} \to \nabla P_0$ a.e. $x \in \Omega$, and we do not further re-label. 

Recall that the support of $\alpha_0^{n}$ was assumed to be contained in the ball $B(0,R_0)$, for all $n$, and that $\nabla P_0^n (\Omega)$ is 
precisely the support of $\alpha_0^n$, hence contained in $B(0,R_0)$. 

Next, we note that, passing to subsequences as needed, $\Phi_t \circ \nabla P_0^{n_k} \to \Phi_t \circ \nabla P_0$ a.e. $x \in \Omega$. We will show this by proving the convergence of $\Phi_t \circ \nabla P_0^{n_k} \to \Phi_t \circ \nabla P_0$ in $L^1$ and passing to a subsequence which converges a.e. $x \in \Omega$.

By Lusin's theorem we have that $\Phi_t$ coincides with a continuous function up to a set of arbitrarily small Lebesgue measure. More precisely, let 
$\vare > 0$ and consider $f^{\vare} \in C^0(B(0,R_0))$ and $E^{\vare} \subset B(0,R_0)$ such that $\Phi_t = f^{\vare}$ outside of $E^{\vare}$ and 
$|E^{\vare}| < \vare$. Since $\Phi_t$ is bounded, for each $t$, we may assume that $f^{\vare}$ is also bounded, uniformly in $\vare$. We use the fact that $\nabla P_0^{n_k} \# \chi_{\Omega}=\alpha_0^{n_k}$, and the analogous fact for $P_0$, to estimate:
\[\limsup_{n_k \to \infty} \int_{\Omega} |\Phi_t \circ \nabla P_0^{n_k} - \Phi_t \circ \nabla P_0|\, dx \leq 
\limsup_{n_k \to \infty} \int_{\Omega} |(\Phi_t  - f^{\vare}) \circ \nabla P_0^{n_k}|\, dx 
\]
\[+\limsup_{n_k \to \infty} \int_{\Omega} |f^{\vare} \circ \nabla P_0^{n_k} - f^{\vare} \circ \nabla P_0|\, dx +  
  \int_{\Omega} |(f^{\vare} - \Phi_t) \circ \nabla P_0|\, dx \]
\[
=\limsup_{n_k \to \infty} \int_{E^{\vare}} |\Phi_t  - f^{\vare}|\, d\alpha_0^{n_k} + \limsup_{n_k \to \infty} \int_{\Omega} |f^{\vare} \circ \nabla P_0^{n_k} - f^{\vare} \circ \nabla P_0|\, dx + \int_{E^{\vare}} |f^{\vare} - \Phi_t|\, d\alpha_0 
\]
\[
\leq 2   \|\Phi_t - f^{\vare}\|_{L^{\infty}}(|\alpha_0^{n_k}(E^{\vare})| + |\alpha_0(E^{\vare})|) + \limsup_{n_k \to \infty} \int_{\Omega} |f^{\vare} \circ \nabla P_0^{n_k} - f^{\vare} \circ \nabla P_0|\, dx.
\]
The first term can be made arbitrarily small since $\{\alpha_0^n\}$ is uniformly integrable, while the second term vanishes because $f^{\vare}$ is continuous. 

We have shown that $\Phi_t \circ \nabla P_0^{n_k} \to \Phi_t \circ \nabla P_0$ a.e. $x \in \Omega$, passing to a further subsequence if needed. Next, recall that $\nabla P^{\ast}_t$ is continuous in $\mathbb{R}^3\setminus N$, so that, passing to the subsequence above, to obtain (\ref{44}) it is enough to show that
$$\vert\{x\in \Omega;\;\Phi_t\circ\nabla P_0(x)\in N\}\vert=0.$$
Recall that $\Phi^{\ast}_t$ preserves Lebesgue measure, so that $|\Phi^{\ast}_t(N)| = |N| = 0$. With this we obtain, using again that 
$\nabla P_0 \# \chi_{\Omega} = \alpha_0$, 
\begin{equation}
\begin{array}{ll}
\vert\{x\in \Omega;\;\Phi_t\circ\nabla P_0(x)\in
N\}\vert&=\vert\{x\in \Omega;\;\nabla P_0(x)\in
\Phi^{\ast}_t(N)\}\vert\\
& = \displaystyle{\int_{\Omega} \chi_{\Phi^{\ast}_t(N)} \circ \nabla P_0 (x) \,dx }\\
&=\displaystyle{\int_{\Phi^{\ast}_t(N)}\alpha_0(y)\,dy}=0,
\end{array}
\end{equation}
and therefore, $I_3\rightarrow0$. 

This establishes our result if $r=1$. 

Now, given that $\nabla (P^{n}_t)^{\ast}$ is uniformly bounded, we obtain the convergence in $L^r(\Omega)$, $1<r<\infty$, by interpolation. 
This concludes the proof.

\end{proofmainthm}

\section{Weak stability for the shallow water case}

The shallow water version of the semigeostrophic equations can
be written as an equation for $h=h(x,t)$, $x = (x_1,x_2) \in \Omega \subset \real^2$, $t \in [0,T)$, and 
$v = (v_1,v_2)$. Here, $h$ is the height of
fluid above  $\Omega $ and $v$ is the velocity. 
We denote $D_t = \partial_t + v \cdot \nabla$ and we set
\[P = P(x ,t)=h(x ,t)+\frac{1}{2}|x|^2 \;\;\;\mbox{ and } X = \nabla P.\]

With this notation, the shallow water SG equations take the form:
 
\begin{equation}
\left\{ \begin{array}{ll}
D_t X=(X-x)^{\perp},&\;\;\;\Omega\times (0,T),\\
\partial_th+ \mbox { div}(hv)=0,&\;\;\;\Omega\times(0,T),\\
v\cdot\nu=0,&\;\;\;\mbox{on }\;\partial\Omega\times[0,T),\\
h(x,0)=h_0(x),&\;\;\;\mbox{in }\;\Omega. \end{array} \right. \label{larasa}
\end{equation}
Here, $(a,b)^{\perp} = (-b,a)$.

In dual variables, this problem can be written as

\begin{equation} 
\left\{\begin{array}{ll}
\partial_t\alpha+\nabla \cdot (U\alpha)=0,&\mathbb{R}^2\times(0,T)\\
\nabla P_t\#h_t=\alpha_t;& t\in(0,T)\\
U(X,t)=[X-\nabla P^{\ast}(X,t)]^{\perp}&\mathbb{R}^2\times[0,T)\\
P^{\ast}(X,t)=\displaystyle{\sup_{x\in\Omega}\{x\cdot X-P(x,t)\}},&\mathbb{R}^2\times[0,T)\\
\alpha(\cdot,0)=\alpha_0 \equiv \nabla P_0 \# h_0.
\end{array}\right.\label{durasa}
\end{equation}

For the modeling background concerning this system, see \cite{culefel,culegan}.
A weak solution for this system was obtained by M. Cullen and W. Gangbo,
see \cite{culegan}, in the case $p>1$, and their existence result is similar to 
Theorem 1.1. In \cite{culefel}, Cullen and Feldman also proved existence of Lagrangian
solutions in physical space for the system \eqref{larasa} for $p>1$. The existence
results, both for weak solutions in dual variables (from \cite{culegan}) and for
Lagrangian solutions in physical variables (from \cite{culefel}), can be extended to 
$p=1$. The proof for weak solutions is an easy adaptation of the work in \cite{helena}, whereas
the proof for Lagrangian solutions is, as before, embedded in what follows. 
 
We are interested in Lagrangian solutions
$(P,F)$, where $\displaystyle{F:\Omega\times[0,T)\rightarrow\Omega}$ 
is a Lagrangian flow associated with $v$, and $P$ is obtained from a weak 
solution in dual variables. However, for the shallow water case, the vector field $v$ is not
divergence-free. Nevertheless, the transport equation $\partial_t h+ \mbox{ div}(hv)=0$ holds.
Therefore, if $F$ is a Lagrangian flow associated with $v$, the solutions
$h$ of this equation satisfy $F_t\#h_0=h_t,\;\forall
t\in[0,T)$. This property replaces the fact that $F$ preserves the Lebesgue 
measure in the incompressible case.

Let $\Omega\subset\mathbb{R}^2$ be open and bounded and let $T>0$. Let 
$P_0 = P_0(x)$ be a convex, bounded function in $\Omega$ such that 
$\displaystyle{h_0(x)=P_0(x)-\frac{1}{2}\vert x\vert^2\geq0}$ in 
$\Omega$. Let $r\in[1,\infty)$ and $P:\Omega\times[0,T)\rightarrow
\mathbb{R}$ be such that 
\begin{eqnarray}
P\in L^{\infty}([0,T),W^{1,\infty}(\Omega))\cap
C([0,T),W^{1,r}(\Omega))\\
P(\cdot ,t)\;\mbox{ is convex in }\;\Omega\;\mbox{ for each }\;t\in
[0,T).\end{eqnarray} Let 
$\displaystyle{h(x,t)=P(x,t)-\frac{1}{2}\vert x\vert^2}$. Let 
$F:\Omega\times[0,T)\rightarrow \Omega$ be a  Borel map satisfying 

\begin{equation}
F\in C([0,T),L^r(\Omega,h_0dx)).
\end{equation} 

\begin{defi}(Lagrangian Solutions)
The pair $(P,F)$ is called a weak Lagrangian solution of \eqref{larasa} in 
$\Omega\times[0,T)$ if 
\begin{enumerate}
    \item[(i)] $F(x,0)=x$, $h_0$-a.e. in $\Omega$, $P(x,0)=P_0(x)$ a.e. in $\Omega$,
    \item[(ii)] for every $t>0$ the map 
    $F_t=F(\cdot ,t):\Omega\rightarrow\Omega$ is such that $F_t\#h_0=h_t$,
    \item[(iii)] There exists a Borel map 
    $F^{\ast}:\Omega\times[0,T)\rightarrow\Omega$ such that, for each 
    $t\in (0,T)$ we have 
    $F^{\ast}_t=F^{\ast}(\cdot ,t)=\Omega\rightarrow\Omega$ satisfies $F^{\ast}_t\#h_t=h_0$, and 
    $F_t\circ F^{\ast}_t(x)=x\;h_t-a.e.\;\mbox{ in }\;\Omega$ and  $F^{\ast}_t\circ
    F_t(x)=x\;h_0-a.e.\;\mbox{ in }\;\Omega$,
    \item[(iv)] The function 
    \begin{equation}
    Z(x,t)=\nabla P(F_t(x),t)\label{2.36rasa}
    \end{equation}
    is a weak solution of 
    \begin{equation}
    \begin{array}{ll}
    \partial_tZ(x,t)=[Z(x,t)-F(x,t)]^{\perp},& \mbox{ on }\;supp\;h_0\;\mbox{ in }\quad\Omega\times[0,T)\\
    Z(x,0)=\nabla P_0(x),&\mbox{ on }\;supp\;h_0\;\mbox{ in }\quad\Omega,
    \end{array}\label{2.37rasa}\end{equation}
    in the following sense: for every $\varphi\in
    C^1_c(\Omega\times[0,T)),$
    \begin{equation}
    \begin{array}{l}
    \displaystyle{\int_{\Omega\times[0,T)}[Z(x,t)\cdot \partial_t\varphi(x,t)+(Z(x,t)-F(x,t))^{\perp}\cdot \varphi(x,t)]h_0(x)dxdt}+\\
    \;\;\;\;\;\;\;\;\;\;\;\;\;\;\;\;\;\;\;\;\;\;\;\;\displaystyle{+\int_{\Omega}\nabla
    P_0(x)\cdot\varphi(x,0)h_0(x)dx=0.}
    \end{array}\label{2.38rasa}
    \end{equation}
\end{enumerate}
\end{defi}
The map $F_t$ is a Lagrangian flow in physical space.
 
With this definition in place we give the precise statement of the existence of Lagrangian solutions in the shallow water case. 

\begin{teo} Let  $\Omega\subset\mathbb{R}^2$ be open and bounded, and assume that 
$\overline\Omega\subset B$, where $B$ is the open ball $B(0,S)$.
Let  $h_0(x)\geq 0$ be such that $P_0 = P_0(x)=h_0(x)+\frac{1}{2}\vert
x\vert^2$ is a convex, bounded function in $B$, and assume that 
\begin{equation}
DP_0\#h_0\in L^q(\nabla P_0(\Omega)) \end{equation} for some  
$q\geq 1$. Then, for each  $T>0$, there exists a Lagrangian solution 
$(P,F)$ of (\ref{larasa}) in $\Omega\times[0,T)$, where (50)--(52)
hold for all $\displaystyle{r\in[1,\infty)}$. Furthermore, the function $Z=Z(x,t)$ defined in (\ref{2.36rasa}) satisfies
$Z(x,\cdot )\in W^{1,\infty}([0,T))$ $h_0$-almost everywhere in $\Omega$, and 
(\ref{2.37rasa}) is also satisfied in the following sense
\begin{equation}
\begin{array}{ll}
\partial_tZ(x,t)=(Z(x,t)-F(x,t))^{\perp},&h_0\mathcal{L}^2\times\mathcal{L}^1\;\mbox{ a.e. in }\;\Omega\times(0,T),\\
Z(x,0)=\nabla P_0(x),&h_0\mathcal{L}^2\;\mbox{ a.e. in }\;\Omega.
\end{array}
\end{equation}
\end{teo}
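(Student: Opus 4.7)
The plan is to establish existence of Lagrangian solutions in the shallow-water case $q\geq 1$ by reducing it to the analogous construction already carried out for the incompressible case. The case $q>1$ is essentially contained in \cite{culefel}: one solves the dual problem \eqref{durasa} via the Cullen--Gangbo existence theorem \cite{culegan}, extracts the Ambrosio flow $\Phi$ in dual space associated with the modified vector field $\tilde U(X,t)=[H(X)-\nabla P^{\ast}(X,t)]^\perp$, and defines the physical flow by $F_t:=\nabla P^{\ast}_t\circ\Phi_t\circ\nabla P_0$. The only structural change from Section~1 is that the reference measure in physical space is $h_0\,dx$ rather than $\chi_\Omega$, but since $\nabla P_0\# h_0=\alpha_0$ (by the Brenier polar factorization characterization of $P_0$ as the optimal map from $h_0$ to $\alpha_0$), the pushforward identity $F_t\# h_0=\Phi_t\#\alpha_0=\alpha_t$ persists, and so do $F_t\circ F_t^\ast=\mathrm{id}$ modulo $h_0$, provided we define $F_t^\ast:=\nabla P_0^\ast\circ\Phi_t^\ast\circ\nabla P^{\ast}_t$. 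The weak form \eqref{2.38rasa} for $Z(x,t)=\nabla P(F_t(x),t)$ follows by substituting $y=F_t(x)$, pulling back to dual variables, and invoking the dual equation $\partial_t\alpha+\nabla\cdot(U\alpha)=0$ against the test function $\varphi\circ F_t^\ast$ weighted by $\alpha_t=F_t\# h_0$.

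For the case $q=1$, I would argue by approximation, in parallel with Theorem \ref{mainthm}. Approximate $\alpha_0\in L^1(\nabla P_0(\Omega))$ by a sequence $\alpha_0^n\in L^{q_n}$, $q_n>1$ (for example by truncation and mollification), with $\alpha_0^n\to\alpha_0$ strongly in $L^1$, and let $h_0^n$ and $P_0^n$ arise from $\alpha_0^n$ via polar factorization on the fixed domain $\Omega$. By Lemma~\ref{cchae} we get a common $\Delta$-regular $N$-function $A$ dominating all the $\alpha_0^n$, and then by adapting \cite{helena} to the shallow-water system, one obtains dual weak solutions $(\alpha^n,P^n,(P^n)^{\ast})$ converging, along a subsequence, in the senses displayed in \eqref{1}. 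For each $n$, the case $q>1$ already treated yields a Lagrangian solution $(P^n,F^n)$ with $F^n_t:=\nabla(P^n_t)^\ast\circ\Phi^n_t\circ\nabla P^n_0$.

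The passage to the limit is the stability step. I would show, along a further subsequence, that $F^n_t\to F_t$ in $L^r(\Omega, h_0\,dx)$ for every $r\in[1,\infty)$, by decomposing
\[
F^n_t-F_t=\bigl(\nabla(P^n_t)^{\ast}\!\circ\Phi^n_t\!\circ\nabla P_0^n-\nabla(P^n_t)^{\ast}\!\circ\Phi_t\!\circ\nabla P_0^n\bigr)+\bigl(\nabla(P^n_t)^{\ast}\!\circ\Phi_t\!\circ\nabla P_0^n-\nabla P^{\ast}_t\!\circ\Phi_t\!\circ\nabla P_0^n\bigr)+\bigl(\nabla P^{\ast}_t\!\circ\Phi_t\!\circ\nabla P_0^n-\nabla P^{\ast}_t\!\circ\Phi_t\!\circ\nabla P_0\bigr),
\]
and controlling each piece exactly as in the proof of Theorem \ref{mainthm}: the first is bounded, after change of variables to dual space against $\alpha_0^n\,dy$, by the Orlicz duality $\|\nabla(P^n_t)^\ast-\nabla P^\ast_t\|_{E_{A^\ast}}\|\alpha^n_0\|_{L_A}$ plus the a.e.\ convergence $\Phi^n_t\to\Phi_t$ from Lemma \ref{Phin}; the second is handled analogously; and the third uses Lusin approximation of $\Phi_t$ together with the uniform integrability of $\{\alpha_0^n\}$ (noting that pushforward by $\nabla P_0^n$ sends $h_0^n\,dx$ to $\alpha_0^n\,dy$). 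Once $F^n_t\to F_t$ and $\nabla P^n_t\to \nabla P_t$ with the appropriate integrability, the weak formulation \eqref{2.38rasa} for $Z^n$ passes to the limit, giving the required equation for $Z=\nabla P\circ F_t$. The a.e.\ ODE statement and the $W^{1,\infty}$-in-time regularity follow, as in Theorem \ref{theo}, from the pointwise identity $Z(x,t)=\Phi_t(\nabla P_0(x))$ valid $h_0$-a.e., combined with the Lipschitz-in-time property of the Ambrosio flow.

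The main obstacle I anticipate is the role of the varying reference measures $h_0^n\,dx$: unlike the incompressible case, neither $F^n_t$ nor the test integrals carry the Lebesgue measure, and $h_0^n$ itself depends on $n$. One must therefore verify that $h_0^n\to h_0$ in a strong enough sense (which should follow from $\alpha_0^n\to\alpha_0$ in $L^1$ together with stability of polar factorization under $L^1$ perturbations), and that all instances of ``uniform integrability of $\alpha_0^n$'' and ``change of variables by $\nabla P_0^n$'' in the Section~3 template remain valid when the source measure is $h_0^n\,dx$ rather than Lebesgue. Assuming this is handled, no new analytic machinery beyond what is developed in Sections~1--3 is required.
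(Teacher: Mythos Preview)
Your approach is essentially the one the paper takes: for $q>1$ it defers to Cullen--Feldman \cite{culefel} via the formula $F_t=\nabla P^{\ast}_t\circ\Phi_t\circ\nabla P_0$, and for $q=1$ it declares the proof ``embedded'' in the stability argument (Theorem~4.2), i.e.\ approximate $\alpha_0$ by $\alpha_0^n\in L^{q_n}$, $q_n>1$, invoke the three-term decomposition of $F^n_t-F_t$ against $h_0^n\,dx$, and pass to the limit using Lemma~\ref{Phin}, Orlicz duality, and Lusin---exactly as you outline. Two small slips to fix: the correct pushforward identity is $F_t\# h_0=h_t$ (not $\alpha_t$), obtained from $\nabla P^{\ast}_t\#\alpha_t=h_t$; and the inverse flow should read $F^{\ast}_t=\nabla P_0^{\ast}\circ\Phi^{\ast}_t\circ\nabla P_t$ (the last factor is $\nabla P_t$, not $\nabla P^{\ast}_t$). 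The paper also supplies the missing ingredient you flag as an obstacle: $h_0^n\to h_0$ in $L^\infty(\Omega)$, which follows from the Cullen--Gangbo estimates and makes the replacement of $h_0\,dx$ by $h_0^n\,dx$ harmless.
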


As before, one obtains  $P$ from the dual problem; Cullen and Feldman showed that $F(\cdot,t)$, given by the expression below, is a Lagrangian flow in physical space:
$$F(x,t)=\nabla P^{\ast}_t\circ\Phi_t\circ\nabla P_0(x),$$
where, for each  $t$, $\Phi_t(X)$ is the Lagrangian flow in dual space associated to the vector field  $U(X,t)=[H(X)-\nabla P^{\ast}_t(X)]^{\perp}$,
whose construction in $\mathbb{R}^2$ arises in the same manner as for the incompressible case. 

Let us now address the stability of Lagrangian solutions. Consider $\alpha_0, \alpha_0^n \in L^1(\mathbb{R}^2)$
with $\alpha_0^n \to \alpha_0$ in $L^1$ and with supports contained in a single ball $B(0,R_0)$. Let $\alpha^n = \alpha^n(x,t)$
be weak solutions in dual variables with initial data $\alpha_0^n$. As before, there exists an Orlicz space $L_A$, with
$\Delta$-regular $N$-function $A$, such that $\{\alpha_0^n\}$, $\alpha_0$ is uniformly bounded in $L_A(\real^3)$.

Let $\alpha^n = \alpha^n(x,t)$ be a weak solution of \eqref{durasa} with initial data $\alpha_0^n$ and let $h^n$ be the corresponding height and $P^n$ be the corresponding modified pressure. It can be easily deduced, from the proofs of Lemma 3.6, Lemma 4.3 and Theorem 4.4 of \cite{culegan} that, since $\alpha_0^n \to \alpha_0$ strongly in $L^1$, there exists a subsequence such that: 
\begin{equation}
\begin{array}{l}
\alpha^n(\cdot,t) \rightharpoonup \alpha(\cdot,t)\;\mbox{ weak}-\ast\; \mathcal{BM},\\
h^n(\cdot,t)\longrightarrow h(\cdot,t)\;\mbox{ in }\;L^{\infty}(\Omega),\\
(P^n_t)^{\ast} \longrightarrow P_t^{\ast}\;\mbox{ strongly in } W^{1,1}_{\loc},\\
\nabla (P^n_t)^{\ast} \rightharpoonup \nabla P_t^{\ast}\;\mbox{ weak}-{\ast} L^{\infty}, 
\end{array}
\label{h}
\end{equation} for each  $0 \leq t < T$, with $\alpha$, $h$, $P^{\ast}$ a weak solution of the semigeostrophic shallow water equations. 
  
We use $\Phi^n$ to denote the Lagrangian flow in the dual space associated to $U^n$, and we denote by 
$F^n_t:=\nabla ( P^n_t)^{\ast}\circ\Phi^n_t\circ\nabla P_0^n$ the corresponding 
Lagrangian flow in physical space.  Accordingly, let $\Phi$ denote the Lagrangian flow in dual variables, associated to the limit velocity $U$ and let 
$F_t:= \nabla P_t^{\ast} \circ \Phi_t\circ \nabla P_0$ be the corresponding Lagrangian flow in physical space.

We note that the result in Lemma 3.1  remains valid in the present case. 

\begin{teo}  There exists a subsequence $\{F^{n_k}_t\} \subset \{F^n_t\}$ such that, for almost every $t \in [0,T)$, we have  
\begin{equation}
\displaystyle{\lim_{k\rightarrow\infty}\int_{\Omega}\vert F^{n_k}_t(x)-F_t(x)\vert^r \, h_0(x)dx=0;} \end{equation}
for any $r\in[1,\infty)$.
\end{teo}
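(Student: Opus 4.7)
The plan is to mirror the proof of Theorem \ref{mainthm} as closely as possible, with three adaptations dictated by the shallow water setup: the integrals are weighted by $h_0$ rather than the Lebesgue measure on $\Omega$; the pushforward identity now reads $\nabla P_0^n \# h_0^n = \alpha_0^n$, so the natural change of variables uses $h_0^n\,dx$; and the additional strong convergence $h_0^n \to h_0$ in $L^{\infty}(\Omega)$ is available from (\ref{h}). First I would reduce to the case $r = 1$ with weight $h_0^n$: since $F^n_t, F_t$ take values in $\Omega \subset B(0,S)$, we have $|F^n_t - F_t| \leq 2S$, so
$$\int_\Omega |F^n_t - F_t|^r h_0\,dx \leq (2S)^r \|h_0 - h_0^n\|_{L^1(\Omega)} + \int_\Omega |F^n_t - F_t|^r h_0^n\,dx,$$
and the first term vanishes by the $L^{\infty}$ convergence $h_0^n \to h_0$. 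Once $r = 1$ is handled, interpolation against the uniform bound $|F^n_t - F_t| \leq 2S$ gives every $r \in [1,\infty)$, exactly as in the last step of the proof of Theorem \ref{mainthm}.

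For $r = 1$, I would decompose $\int_\Omega |F^n_t - F_t|\,h_0^n\,dx \leq I_1 + I_2 + I_3$ by the same telescoping used in the incompressible case (first replacing $\Phi^n_t$ by $\Phi_t$ inside, then $\nabla(P^n_t)^{\ast}$ by $\nabla P^{\ast}_t$ outside, then $\nabla P_0^n$ by $\nabla P_0$ innermost). The identity $\nabla P_0^n \# h_0^n = \alpha_0^n$ rewrites each $h_0^n$-weighted integral on $\Omega$ as an integral against $d\alpha_0^n$ on $\mathbb{R}^2$; from that point the treatments of $I_1$ and $I_2$ copy verbatim the arguments of the incompressible proof. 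In particular, the sub-term $I_1^1$ uses the Orlicz duality of Theorem \ref{csob}, the norm preservation $\|\alpha^n_t\|_{L_A} = \|\alpha_0^n\|_{L_A}$ (which holds because $U^n = (X - \nabla(P^n_t)^{\ast})^{\perp}$ is divergence-free in $\mathbb{R}^2$, so $\Phi^n_t$ is Lebesgue-measure-preserving), and the convergence $\nabla(P^n_t)^{\ast} \to \nabla P^{\ast}_t$ in $(E_{A^{\ast}})_{\loc}$ coming from (\ref{h}) together with Lemma \ref{chelena}. The sub-terms $I_1^{2,1}$, $I_1^{2,2}$, and $I_2$ rely on Lemma \ref{Phin} (whose shallow water analogue the paper explicitly asserts), the a.e.\ differentiability of the convex $P^{\ast}_t$, a further subsequence giving $\nabla(P^{n_k}_t)^{\ast} \to \nabla P^{\ast}_t$ pointwise a.e., and the Lebesgue-measure-preservation of $\Phi^{\ast}_t$ to move the non-differentiability null set $N$ of $\nabla P^{\ast}_t$ out of the way along the flow.

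The step I expect to require the most care is $I_3 = \int_\Omega |\nabla P^{\ast}_t \circ \Phi_t \circ \nabla P_0^n - \nabla P^{\ast}_t \circ \Phi_t \circ \nabla P_0|\,h_0^n\,dx$. I would pass to a subsequence along which $\nabla P_0^n \to \nabla P_0$ a.e. in $\Omega$ (obtained, as in the incompressible case, from the $W^{1,1}_{\loc}$ convergence of $(P^n)^{\ast}$ via convex duality), and then show $\Phi_t \circ \nabla P_0^{n_k} \to \Phi_t \circ \nabla P_0$ in $L^1(\Omega, h_0^{n_k}dx)$ by Lusin-approximating $\Phi_t$ on the ball $B(0,R_0) \supset \mathrm{supp}\,\alpha_0^{n_k}$ by a continuous $f^{\varepsilon}$ that coincides with $\Phi_t$ outside a set $E^{\varepsilon}$ of small Lebesgue measure; the key estimate is
$$\int_\Omega |(\Phi_t - f^{\varepsilon}) \circ \nabla P_0^{n_k}|\,h_0^{n_k}\,dx = \int_{E^{\varepsilon}} |\Phi_t - f^{\varepsilon}|\,d\alpha_0^{n_k} \leq 2\|\Phi_t\|_{L^{\infty}}\,\alpha_0^{n_k}(E^{\varepsilon}),$$
controlled by the uniform integrability of $\{\alpha_0^n\}$ coming from strong $L^1$ convergence. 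To conclude, I would check that $|\{x \in \Omega : \Phi_t \circ \nabla P_0(x) \in N\}| = 0$ by pushing forward through $\nabla P_0 \# h_0 = \alpha_0$ and using $|\Phi^{\ast}_t(N)| = 0$ together with $\alpha_0 \in L^1$. The main obstacle, and the point where the shallow water structure visibly intrudes, is the careful reweighting between $h_0$, $h_0^n$, and $d\alpha_0^n$ throughout this chain of estimates; the $L^{\infty}$ convergence $h_0^n \to h_0$ and the uniform integrability of $\{\alpha_0^n\}$ are exactly what make these transitions go through.
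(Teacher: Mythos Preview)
Your proposal is correct and matches the paper's own proof exactly: reduce from weight $h_0$ to weight $h_0^n$ via the uniform convergence $h_0^n \to h_0$ and the uniform $L^\infty$ bound on $F^n_t$, reduce to $r=1$ by interpolation, telescope into the same three terms $\tilde I_1 + \tilde I_2 + \tilde I_3$, and run the incompressible analysis with the pushforward identities $\nabla P_0^n \# h_0^n = \alpha_0^n$ and $\Phi^n_t \# \alpha_0^n = \alpha^n_t$ in place of their Lebesgue-measure counterparts. One small imprecision to fix in your write-up: the identity $\nabla P_0 \# h_0 = \alpha_0$ only shows that $\{x \in \Omega : \Phi_t \circ \nabla P_0(x) \in N\}$ has \emph{$h_0$-measure} zero, not Lebesgue measure zero, but since the dominated-convergence step is ultimately taken against $h_0\,dx$ (after trading $h_0^n$ for $h_0$ via uniform convergence) this is exactly what is required.
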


\begin{proof}
Since $F^n$ is bounded uniformly in $L^{\infty}([0,T)\times\Omega)$, since $h_0^n \to h_0$ uniformly, and since $\Omega$ is bounded, it is clearly enough to prove that:
\begin{equation} \label{shwatermainthm}
\lim_{n\rightarrow\infty}\int_{\Omega}\vert
F^n_t(x)-F_t(x)\vert^r h^n_0(x)dx=0.
\end{equation}

To show \eqref{shwatermainthm} we note that, as in the incompressible case, we need only analyze the case $r=1$, as $r>1$ follows by interpolation. 
We  have:
\begin{equation}
\begin{array}{l}
\displaystyle{\int_{\Omega}\vert F^n_t(x)-F_t(x)\vert
h_0^n(x)dx}\leq\\
\;\;\;\;\;\;\;\;\;\;\;\;\leq\displaystyle{\left\{\int_{\Omega}\vert\nabla (
P^n_t)^{\ast} \circ \Phi^n_t \circ\nabla P^n_0(x)-\nabla (
P^n_t)^{\ast} \circ \Phi_t \circ\nabla P_0^n(x)\vert h_0^n(x)dx\right.}\\
\;\;\;\;\;\;\;\;\;\;\;\;\left.+\displaystyle{\int_{\Omega}\vert\nabla (
P^n_t)^{\ast} \circ \Phi_t \circ\nabla P^n_0(x)-
\nabla (P^{\ast}_t)\circ \Phi_t \circ\nabla
P_0^n(x)\vert h_0^n(x)dx}\right.\\
\;\;\;\;\;\;\;\;\;\;\;\;+\left.\displaystyle{\int_{\Omega}\vert\nabla
P^{\ast}_t \circ \Phi_t \circ\nabla P_0^n(x)-\nabla P^{\ast}_t
\circ \Phi_t \circ\nabla
P_0(x)\vert h_0^n(x)dx}\right\}\\
\;\;\;\;\;\;\;\;\;\;\;\;\;\equiv\{\tilde I_1+\tilde I_2+\tilde I_3\}.
\end{array}\end{equation}

The analysis of each of these integrals follows closely the analysis performed on the analogous integrals in Proposition 3.1, once we use the facts   that $\nabla P_0^n\#h_0^n=\alpha_0^n,\;\Phi^n_t\#\alpha_0^n=\alpha^n_t$.     
\end{proof}

\section{An example in the space of measures}

The purpose of this section is to describe a counterexample for Theorem \ref{mainthm} for potential vorticities which are
not absolutely continuous with respect to the Lebesgue measure.

For the discussion in this section we will ignore the vertical variable in the incompressible 
SG equations; the argument we will present can be easily adapted to accomodate the third direction.
 
We fix the physical space to be the planar disk $\Omega=\displaystyle{{B(0,1)}}$. Let $z_0 = (1,0)$ and set 
\[\alpha_0 = \pi \delta_{z_0},\]
where $\delta_P$ denotes the Dirac measure at $P$.
Let $\alpha(t) = \pi \delta_{z(t)}$, with $z(t) = (\cos t, \sin t)$. It can be checked that $\alpha$ is
a weak solution of \eqref{du}, in the sense of \cite{loeper}. 
Next, observe that the unique (up to a constant) convex potential for the optimal transport map between $\chi_{\Omega}$ and $\alpha(t)$
is given by $P= P(x,t)=z(t) \cdot x$. 
Its Legendre transform is $P^{\ast}(y,t)=\Vert y-z(t)\Vert$ and, consequently,   
$\nabla P^{\ast}(y,t)=\frac{y-z(t)}{\Vert y-z(t)\Vert}$. 
The Lagrangian flow in dual space, restricted to the support of $\alpha_0$, is precisely $z_0 \mapsto z(t)$. 
The Lagrangian flow in physical space cannot be computed by \eqref{flu}, since $\Phi_t \circ \nabla P_0(\cdot)$
is identically equal to $z(t)$, where $\nabla P^{\ast}$ is not defined. 

One can use approximations as a strategy to circumvent the difficulty described above; as we will show,
this does not work.

\begin{prop}
Let $z(t) = (\cos t,\sin t)$ and set
\[\alpha^{\vare} \equiv \frac{1}{\vare^2}\chi_{B(z(t),\varepsilon)}.\]
Then $\alpha^{\vare}$ is an exact weak solution of the semigeostrophic equations in dual variables
\eqref{du} with initial potential vorticity $\alpha^{\vare}(\cdot,0)$.
\end{prop}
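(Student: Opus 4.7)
The plan is to construct the convex pressure $P(\cdot,t)$ from the push-forward condition, compute the Legendre dual $P^{\ast}(\cdot,t)$ and velocity $U$, and then verify the weak transport equation by a change-of-variables computation that exploits the compatibility $z'(t) = Jz(t)$ (with $J$ now the $2\times 2$ rotation $J(a,b) = (-b,a)$). The push-forward condition $\nabla P(\cdot,t)\sharp \chi_{\Omega} = \alpha^{\varepsilon}(\cdot,t)$ is solved by inspection: the map $T_t(x) := z(t) + \varepsilon x$ carries $B(0,1)$ bijectively onto $B(z(t),\varepsilon)$ with constant Jacobian $\varepsilon^2$, hence $T_t \sharp \chi_{\Omega} = \varepsilon^{-2}\chi_{B(z(t),\varepsilon)} = \alpha^{\varepsilon}(\cdot,t)$. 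Since $T_t = \nabla_x P(\cdot,t)$ for the convex function
\[P(x,t) := z(t)\cdot x + \tfrac{\varepsilon}{2}|x|^2,\]
Brenier's theorem identifies this as the required convex pressure.

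Next, for $X \in B(z(t),\varepsilon)$, the supremum defining $P^{\ast}(X,t)$ is attained at the interior point $x^{\ast} = (X - z(t))/\varepsilon \in B(0,1)$, yielding
\[P^{\ast}(X,t) = \frac{|X - z(t)|^2}{2\varepsilon}, \qquad \nabla P^{\ast}(X,t) = \frac{X - z(t)}{\varepsilon}\]
on the support of $\alpha^{\varepsilon}(\cdot,t)$ (values of $P^{\ast}$ elsewhere are irrelevant because $U$ enters the equation only weighted by $\alpha^{\varepsilon}$). Substituting, $U(X,t) = J\bigl[X - (X - z(t))/\varepsilon\bigr]$ on $B(z(t),\varepsilon)$.

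Finally, I would verify the weak transport equation by testing against $\phi \in C^1_c(\mathbb{R}^2 \times [0,T))$. Under the substitution $X = z(t) + \varepsilon y$,
\[\int_{\mathbb{R}^2}\alpha^{\varepsilon}(X,t)\phi(X,t)\,dX = \int_{B(0,1)}\phi(z(t)+\varepsilon y, t)\,dy,\]
so differentiating in $t$ produces $\int_{B(0,1)}[\partial_t\phi + z'(t)\cdot \nabla_X\phi]\,dy$, while the same substitution applied to $\int (U\cdot \nabla\phi)\,\alpha^{\varepsilon}\,dX$ gives $\int_{B(0,1)}[Jz(t) + (\varepsilon - 1)Jy]\cdot \nabla_X\phi\,dy$, both integrands evaluated at $(z(t)+\varepsilon y, t)$. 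Using $Jz(t) = z'(t)$, the discrepancy reduces to $(\varepsilon - 1)\int_{B(0,1)} Jy\cdot \nabla_X\phi(z(t)+\varepsilon y, t)\,dy$. Setting $\Psi(y) := \phi(z(t)+\varepsilon y, t)$ and integrating by parts on $B(0,1)$, this integral vanishes since $\nabla_y \cdot (Jy) = 0$ in $B(0,1)$ and $Jy\cdot \nu(y) = Jy \cdot y = 0$ on $\partial B(0,1)$. This establishes $\partial_t\alpha^{\varepsilon} + \nabla\cdot(U\alpha^{\varepsilon}) = 0$ in the weak sense. No step poses a real obstacle; the proof rests entirely on the two structural facts $z'(t) = Jz(t)$ and the tangency of $y \mapsto Jy$ to the unit circle.
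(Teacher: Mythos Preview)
Your argument is correct. You build the same convex potential $P(x,t)=z(t)\cdot x+\tfrac{\varepsilon}{2}|x|^2$ and Legendre dual as the paper, but then diverge in how you verify the transport equation: you work in Eulerian form, rewriting $\int\alpha^{\varepsilon}\phi\,dX$ via the change of variables $X=z(t)+\varepsilon y$ and reducing the identity to the vanishing of $\int_{B(0,1)} Jy\cdot\nabla_y\Psi\,dy$, which follows from $\mathrm{div}(Jy)=0$ and the tangency $Jy\cdot y=0$ on $\partial B(0,1)$. The paper instead argues Lagrangianly: it solves the ODE $y'=U^{\varepsilon}(y,t)$ for $y(0)\in B(z_0,\varepsilon)$, subtracts $z'=z^{\perp}$, and observes that $y-z$ undergoes a rigid rotation at angular speed $(\varepsilon-1)/\varepsilon$, so the flow of $U^{\varepsilon}$ carries $B(z_0,\varepsilon)$ rigidly onto $B(z(t),\varepsilon)$. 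Your route is slightly cleaner for the proposition as stated, but the paper's route has a payoff you forgo: it simultaneously produces the explicit dual-space Lagrangian map $\Phi^{\varepsilon}_t$, which is exactly what is needed immediately afterwards to compute the physical-space flow $F^{\varepsilon}_t$ via \eqref{flu} and exhibit the counterexample. With your approach that flow would have to be derived separately.
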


\begin{proof}

Let us first establish the relation between a potential vorticity of the form $\alpha^{\vare}$
and the corresponding velocity $U^{\vare}$. To this end, we fix $\overline{z} \in \real^2$ and we consider
\[\overline{\alpha}^{\vare}\equiv \frac{1}{\vare^2}\chi_{B(\overline{z},\varepsilon)}.\]  
The optimal transport map between $\chi_{\Omega}$ and $\overline{\alpha}^{\vare}$ 
is given by $\nabla \overline{P}^{\vare}$, where the convex potential $\overline{P}^{\vare}$ is, up to a constant, 
\[\overline{P}^{\vare}= \overline{P}^{\vare}(x) = \overline{z}\cdot x+\varepsilon \frac{\vert x\vert^2}{2},\]
and hence $\nabla \overline{P}^{\vare}(x)=\overline{z}+\varepsilon x$. Indeed, it can be easily verified that 
$\nabla \overline{P}^{\vare} \# \chi_{\Omega} = \overline{\alpha}^{\vare}$ and $\overline{P}^{\vare}$ is convex, so that the 
uniqueness part of Brenier's Polar Factorization Theorem, see \cite{bre}, may be applied. The Legendre transform of $\overline{P}^{\vare}$ is
\[
(\overline{P}^{\vare})^{\ast} = (\overline{P}^{\vare})^{\ast}(y) = \left\{\begin{array}{ll}
\displaystyle{\frac{\Vert y - \overline{z}\Vert^2}{2\vare}}, &\mbox{ if } y \in B(\overline{z},\vare)\\ \\
\Vert y - \overline{z}\Vert - \frac{\vare}{2}, &\mbox{ if } y \notin B(\overline{z},\vare). 
\end{array} \right. \]
Therefore, we find that 
\begin{equation} \label{nablaPast}
\nabla(\overline{P}^{\vare})^{\ast} = \nabla(\overline{P}^{\vare})^{\ast}(y) = \left\{\begin{array}{ll}
\displaystyle{\frac{y - \overline{z}}{\vare}}, &\mbox{ if } y \in B(\overline{z},\vare)\\ \\
\displaystyle{\frac{y-\overline{z}}{\Vert y - \overline{z}\Vert}}, &\mbox{ if } y \notin B(\overline{z},\vare). 
\end{array} \right.
\end{equation} 

For each fixed $t$, we have that $\alpha^{\vare}$ is of the form $\overline{\alpha}^{\vare}$ with $\overline{z}=z(t)$. Therefore the corresponding semigeostrophic velocity $U^{\vare}$, in dual variables, is given by
\[U^{\vare}=U^{\vare}(y,t) = (y - \nabla (P^{\vare}_t)^{\ast}(y))^{\perp},\]
where $\nabla (P^{\vare}_t)^{\ast}$ is given by the expression in \eqref{nablaPast} with $\overline{z}=z(t)$.

Consider $ y_0 \in B(z_0,\vare)$. Let $y=y(t)$ be the solution of 
\[\left\{
\begin{array}{l}
y^{\prime} = U^{\vare}(y,t),\\
y(0)=y_0.
\end{array} 
\right.
\]
As long as $y(t) \in B(z(t),\vare)$ we see that 
\[ 
y^{\prime} = \left( 
\frac{\vare -1}{\vare}y + \frac{z(t)}{\vare}
\right)^{\perp}.\\
\]
We also have
\[z^{\prime} = z^{\perp}.\]
Thus, subtracting these two equations, we deduce that
\[\left\{
\begin{array}{l}
(y-z)^{\prime} =  \displaystyle{\frac{\vare -1}{\vare}}(y-z)^{\perp},\\ \\
(y-z)(0)=y_0 - z_0 \in B(0,\vare).
\end{array} 
\right.
\]
Therefore $y-z$ rotates around the origin at the rate $(\vare -1)/\vare$ hence, in particular, $y(t)$ rotates around $z(t)$ and never 
leaves $B(z(t),\vare)$. We have shown that the flow of $U^{\vare}$ maps $B(z_0,\vare)$ to $B(z(t),\vare)$ through a rigid rotation.

This implies that $\alpha^{\vare}$ is a weak solution of the transport equation $\partial_t \alpha^{\vare} + U^{\vare}\cdot\nabla\alpha^{\vare} = 0$, as 
desired.
 
\end{proof} 

\begin{obs} Note that $\alpha^{\vare}(\cdot,t) \rightharpoonup \alpha(t)$ weak-$\ast$ $\mathcal{BM}$, in accordance with \cite{loeper}. 
\end{obs}

\begin{obs} From the proof above we obtain an explicit expression for the Lagrangian flow in dual variables for Lagrangian markers inside $B(z_0,\vare)$, namely:
\begin{equation} \label{lafdu}
\Phi^{\vare}=\Phi^{\vare}_t(y_0)= z(t) + \left[
\begin{array}{lr}
\cos \left(\frac{\vare -1}{\vare} t \right) & -\sin \left(\frac{\vare -1}{\vare} t \right) \\ \\
\sin \left(\frac{\vare -1}{\vare} t \right) & \cos \left(\frac{\vare -1}{\vare} t \right)
\end{array} 
\right] (y_0 - z_0).
\end{equation} 
\end{obs}

Next we compute the Lagrangian flow in physical space associated to $\alpha^{\vare}$ using expression \eqref{flu}. Let $x \in \Omega$ and note that 
$\nabla P_0^{\vare}(x) = z_0 + \vare x \in B(z_0,\vare)$. Hence we may use the Lagrangian map \eqref{lafdu},  
together with the expression  in \eqref{nablaPast} with $\overline{z}=z(t)$, to obtain:
\begin{equation} \label{Fvare}
F^{\vare}_t= F^{\vare}_t(x)= \left[
\begin{array}{lr}
\cos \left(\frac{\vare -1}{\vare} t \right) & -\sin \left(\frac{\vare -1}{\vare} t \right) \\ \\
\sin \left(\frac{\vare -1}{\vare} t \right) & \cos \left(\frac{\vare -1}{\vare} t \right)
\end{array} 
\right]x.
\end{equation} 

In other words, as $\vare \to 0$, $F_t^{\vare}$ describes a rotation around the origin in physical 
space with arbitrarily large angular velocity. 
 In short, a concentrated vortex in dual space corresponds to a Lagrangian fast eddy in physical
space, but concentrating the dual space vortex into a point produces an unphysical eddy which 
rotates at infinite speed. This shows that it is impossible to extend the weak stability theory we 
developed here in $L^1$ to the full space of measures, while keeping the strong convergence of sequences 
of Lagrangian flows as a conclusion. There are two possibilities for further work in this direction. One 
is to develop a theory of {\it weak} convergence of Lagrangian flows associated with converging 
sequences of potential vorticities in the space of measures and another is to try to extend 
the $L^1$ theory  to spaces of continuous measures, considering that Diracs in dual 
space are associated with unphysical infinite velocity eddies and are, therefore, unphysical themselves,
but perhaps other measures, such as potential vortex sheets, may be associated with meaningful flows.   

We conclude with the following remark. We established the convergence of Lagrangian flows  a.e. in time,
$L^r$ in space. However, this may not be optimal, and this leads to an interesting line of investigation. It was pointed 
out, by Brenier and Gangbo in \cite{bregan}, that the topology induced by $L^r$-convergence in the space of diffeomorphisms is not very satisfactory. One may investigate, for instance, whether the convergence of Lagrangian flows can be improved for potential vorticities in H\"{o}lder spaces, using the regularity theory for optimal transport developed by Ma, Trudinger and Wang in \cite{matrudwang}. 

\vspace{0.5cm} 

{\small Acknowledgments: The research presented here is part of the PhD thesis of J. C. O. Faria, who 
was supported in part by CNPq grant \#141.217/2004-9. The research of M. 
C. Lopes Filho is supported in part by CNPq grant \#303.301/2007-4 and the research of 
H. J. Nussenzveig Lopes is supported in part by CNPq grant \#302.214/2004-6. This work acknowledges 
the support of FAPESP grant \#2007/51490-7.}

\vspace{0.5cm}

\end{document}